\theoremstyle{plain}
\newtheorem{thm}{Theorem}
\theoremstyle{plain}
\newtheorem{lem}[thm]{Lemma}
\theoremstyle{plain}
\newtheorem{cor}[thm]{Corollary}
\renewcommand{\S}{\mathfrak S}
\newcommand{\binomE}[2]{\binom{#1}{#2}_E}
\newcommand{\up}{\kern -2.7pt\uparrow}
\newcommand{\h}{\mathbf h}
\renewcommand{\r}{\mathbf r}
\newcommand{\Sym}{\mathbf{Sym}}
\renewcommand{\Bar}{\,|\,}
\DeclareMathOperator{\des}{des}
\DeclareMathOperator{\altdes}{altdes}
\DeclareMathOperator{\maj}{maj}
\DeclareMathOperator{\altmaj}{altmaj}
\newcommand{\leftprod}[1]{\overset{\longleftarrow}{\prod_{#1}}}
\begin{document}

\title{Counting Permutations by Alternating Descents}

\author{Ira M. Gessel\\
Department of Mathematics\\
Brandeis University\\
\texttt{\href{mailto:gessel@brandeis.edu}{gessel@brandeis.edu}}\and
Yan Zhuang\\
Department of Mathematics\\
Brandeis University\\
\texttt{\href{mailto:zhuangy@brandeis.edu}{zhuangy@brandeis.edu}}}
\maketitle
\begin{abstract}

We find the exponential generating function 
for permutations with all valleys even and all peaks odd, and use
it to determine the asymptotics for its coefficients,  answering a
question posed by Liviu Nicolaescu. The generating function can be expressed
as the reciprocal of a  sum involving Euler numbers: 
\[
\left(1-E_1x+E_{3}\frac{x^{3}}{3!}-E_{4}\frac{x^{4}}{4!}+E_{6}\frac{x^{6}}{6!}-E_{7}\frac{x^{7}}{7!}+\cdots\right)^{-1},
\tag{$*$}
\]
where $\sum_{n=0}^\infty E_n x^n\!/n! = \sec x + \tan x$.
We give two proofs of this formula. The first uses a system of differential equations whose solution gives the generating function
\begin{equation*}
\frac{3\sin\left(\frac{1}{2}x\right)+3\cosh\left(\frac{1}{2}\sqrt{3}x\right)}{3\cos\left(\frac{1}{2}x\right)-\sqrt{3}\sinh\left(\frac{1}{2}\sqrt{3}x\right)},
\end{equation*} 
which we then show is equal to $(*)$. The second proof derives $(*)$ directly from general permutation enumeration techniques, using noncommutative symmetric functions. The generating function $(*)$ is an ``alternating'' analogue of David and Barton's generating function 
\[
\left(1-x+\frac{x^{3}}{3!}-\frac{x^{4}}{4!}+\frac{x^{6}}{6!}-\frac{x^{7}}{7!}+\cdots\right)^{-1},
\]
for permutations with no increasing runs of length 3 or more. Our general results give  further  alternating analogues of permutation enumeration formulas, including results of  Chebikin and Remmel.
\end{abstract}

\section{Introduction}

This paper was inspired by a question posed by Liviu
Nicolaescu on discrete Morse functions arising in combinatorial
topology \cite{Nicolaescu2012}, which reduces to the following combinatorial
problem. Let $\pi=\pi_{1}\pi_{2}\cdots\pi_{n}$ be a permutation in
$\mathfrak{S}_{n}$, the set of permutations of  $[n]=\{1,2,\dots,n\}$ (or more generally, any sequence of distinct integers). We say
that $i$ is a \emph{peak} of $\pi$ if $\pi_{i-1}<\pi_{i}>\pi_{i+1}$
and that $j$ is a \emph{valley} of $\pi$ if $\pi_{j-1}>\pi_{j}<\pi_{j+1}$.
For example, if $\pi=4762315$, then its peaks are 2 and 5, and its
valleys are 4 and 6. 
Let $f(n)$ be the
number of permutations in  $\mathfrak{S}_{n}$ that have all valleys even and all peaks odd. What is the
behavior of $f(n)/n!$ as $n\rightarrow\infty$?

To answer this question, we find the exponential generating function
for permutations with all valleys even and all peaks odd and use it to
derive an asymptotic formula for $f(n)/n!$,  thus answering
Nicolaescu's question. 

In Section \ref{s-2}, we show that the exponential generating function $F(x)$ for $f(n)$ is given by the formula
\begin{equation}
\label{e-F1}
F(x) =\frac{3\sin\left(\frac{1}{2}x\right)+3\cosh\left(\frac{1}{2}\sqrt{3}x\right)}{3\cos\left(\frac{1}{2}x\right)-\sqrt{3}\sinh\left(\frac{1}{2}\sqrt{3}x\right)}.
\end{equation}
by a finding recurrences for $f$ and for an auxiliary sequence, and solving the corresponding system of differential equations. It follows from this generating function that $f(n)/n!$ is asymptotic to $2\beta^{n+1}$, where $\beta = .7693323708\cdots$ is the zero of the denominator of $F(x)$ of smallest modulus.

In Section \ref{s-3}, we
show that 
\begin{equation}
\label{e-F2}
F(x) = 
\left(1-E_1x+E_{3}\frac{x^{3}}{3!}-E_{4}\frac{x^{4}}{4!}+E_{6}\frac{x^{6}}{6!}-E_{7}\frac{x^{7}}{7!}+\cdots\right)^{-1},
\end{equation}
where the Euler numbers $E_n$ are defined by 
$\sum_{n=0}^\infty E_n x^n\!/n! = \sec x + \tan x$.
Although we can prove directly that \eqref{e-F1} and \eqref{e-F2} are equivalent, we also give a more conceptual direct proof of \eqref{e-F2}.
First we recall a result of David and Barton \cite[pp.~156--157]{David1962}:
the exponential generating
function for permutations that avoid the increasing consecutive pattern
$12\cdots m$ (i.e., permutations in which every increasing run has length less than $m$) is  
\begin{align*}
\left(1-x+\frac{x^{m}}{m!}-\frac{x^{m+1}}{\left(m+1\right)!}+\frac{x^{2m}}{\left(2m\right)!}-\frac{x^{2m+1}}{\left(2m+1\right)!}+\cdots\right)^{-1},
\end{align*}
which for $m=3$ is 
\begin{equation}
\label{e-m=3}
\left(1-x+\frac{x^{3}}{3!}-\frac{x^{4}}{4!}+\frac{x^{6}}{6!}-\frac{x^{7}}{7!}+\cdots\right)^{-1}.
\end{equation}

We can explain the similarity between \eqref{e-F2} and \eqref{e-m=3} by using the concept of ``alternating descents'' introduced by Denis Chebikin \cite{chebikin}. We say that $i\in [n-1]$ is a \emph{descent} of a  permutation $\pi$ in $\S_n$ if $\pi_i>\pi_{i+1}$.  An \emph{increasing run} of $\pi$ is a maximal consecutive subsequence of $\pi$ containing no descents. So the increasing runs of $253146$ are $25$, $3$, and $146$. Note that the number of increasing runs of a nonempty permutation is one more than the number of descents.

Following Stanley \cite{st-alt}, we say that $\pi=\pi_{1}\pi_{2}\cdots\pi_{n}$
is an \emph{alternating }permutation if $\pi_{1}>\pi_{2}<\pi_{3}>\pi_{4}<\cdots$.
If instead $\pi_{1}<\pi_{2}>\pi_{3}<\pi_{4}>\cdots$, then we say
that $\pi$ is \emph{reverse-alternating}. 
It is well known that the number of alternating permutations of $[n]$ is $E_n$.
Alternating permutations of length $n$ are in bijection with reverse-alternating
permutations of length $n$ via complementation; that is, if $\pi=\pi_{1}\pi_{2}\cdots\pi_{n}$
is alternating, then its complement 
\[
\pi^{c}=\left(n+1-\pi_{1}\right)\left(n+1-\pi_{2}\right)\cdots\left(n+1-\pi_{n}\right)
\]
is reverse-alternating, and vice versa. Thus, the Euler numbers count
reverse-alternating permutations as well.

Following Chebikin, we say that $i\in [n-1]$ is an \emph{alternating descent} of $\pi$ if $i$ is odd and $\pi_i>\pi_{i+1}$ or if $i$ is even and $\pi_i<\pi_{i+1}$. We define an \emph{alternating run}%
  \footnote{The term ``alternating run'' has been used for a different concept (see Definition 1.37 
   in \cite{Bona2012}, or   \cite{oeisA059427}), but our usage here should cause no confusion.} 
of $\pi$ to be a maximal consecutive subsequence of $\pi$ containing no alternating descents. Thus the alternating runs of the permutation $3421675$ are $342$, $1$, and $675$. An alternating run that starts in an odd position will be a reverse-alternating permutation and an alternating run that starts in an even position will be an alternating permutation, and the number of alternating runs of a permutation will be one more than the number of alternating descents. 

A peak in an even position of a permutation $\pi$ will correspond to a subsequence $\pi_{2i-1}<\pi_{2i}>\pi_{2i+1}$ and thus will be contained in an alternating run of length  at least~3, and similarly a valley in an odd position must also be contained in an alternating run of length at least 3. Conversely, any alternating run of length at least 3 contains (as its second entry) either a peak in an even position or a valley in an odd position. Thus the permutations with all valleys even and all peaks odd are the same as the permutations in which every alternating run has length less than 3.

Then the similarity between \eqref{e-F2} and \eqref{e-m=3} can be explained by a general principle that whenever we have a formula like \eqref{e-m=3} that counts permutations with a restriction on increasing run lengths, there is an analogous formula, with $x^n\!/n!$ replaced by $E_n x^n\!/n!$, for counting permutations with the same restriction on alternating run lengths. We will make this idea precise in Section 3, using basic facts about noncommutative symmetric functions \cite{ncsf1}. In particular, we obtain Chebikin's formula \cite{chebikin} on ``alternating Eulerian polynomials'' that count permutations by the number of alternating descents, and Remmel's formula \cite{remmel} on counting permutations by the number of alternating descents and ``alternating major index.''

\section{Permutations with All Valleys Even and All Peaks Odd}
\label{s-2}

\subsection{The exponential generating function}

In this section, we use recurrrences to derive the exponential generating function for
permutations with all valleys
even and all peaks odd.
\begin{thm}
\label{t-m=00003D3}Let $F$ be the exponential generating function
for permutations with all valleys even and all peaks odd. Then $F$
is given by 
\begin{equation}
\label{e-Fgf}
F(x)  =\frac{3\sin\left(\frac{1}{2}x\right)+3\cosh\left(\frac{1}{2}\sqrt{3}x\right)}{3\cos\left(\frac{1}{2}x\right)-\sqrt{3}\sinh\left(\frac{1}{2}\sqrt{3}x\right)}.
\end{equation}
The first few values of $f(n)$ are as follows:
\[\vbox{\halign{\ \hfil\strut$#$\hfil\ \vrule&&\hfil\ $#$\ \hfil\cr
n&0&1&2&3&4&5&6&7&8&9&10&11&12\cr
\noalign{\hrule}
f(n)&1&1&2&4&13&50&229&1238&7614&52706&405581&3432022&31684445\cr}}
\]

\end{thm}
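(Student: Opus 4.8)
The plan is to set up a system of differential equations governing $F$ and an auxiliary generating function, following the description in the introduction. The key insight from the introduction is that permutations counted by $f(n)$ are exactly those in which every alternating run has length less than $3$, i.e., length $1$ or $2$. Since an alternating run starting in an odd position is reverse-alternating and one starting in an even position is alternating, I expect to track not just $f(n)$ but also a companion sequence distinguished by some parity condition on the last entry or on the position at which the current run begins.

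First I would introduce an auxiliary exponential generating function, say $G(x)$, counting the same permutations but weighted or restricted according to whether the terminal alternating run has length $1$ or $2$ (or according to the parity of $n$ relative to where the final run starts). The natural combinatorial decomposition is to remove the largest element $n$: its position must be a peak or an endpoint, and the parity constraints on peaks and valleys translate into constraints on where $n$ can sit and on the structure of the two permutations flanking it. This decomposition should yield a pair of coupled recurrences for $f(n)$ and the auxiliary sequence, which in generating-function language become a first-order linear system
\begin{align*}
F'(x) &= a_{11}F(x) + a_{12}G(x), \\
G'(x) &= a_{21}F(x) + a_{22}G(x),
\end{align*}
with constant (or simple) coefficients, together with the initial conditions $F(0)=G(0)=1$.

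Next I would solve this linear system. Decoupling gives a single second- or third-order constant-coefficient linear ODE for $F$, whose characteristic equation has roots determined by the matrix of coefficients. The appearance of $\tfrac12$ and $\tfrac{1}{2}\sqrt3$ in the closed form \eqref{e-Fgf}, together with both trigonometric and hyperbolic functions, strongly suggests characteristic roots that are cube roots of a fixed quantity --- a primitive sixth-root-of-unity pattern producing one real hyperbolic mode $\tfrac12\sqrt3$ and a complex-conjugate oscillatory pair of modulus contributing the $\sin$ and $\cos$ of $\tfrac12 x$. Solving the ODE and imposing the initial data then pins down the constants and yields the stated quotient of $\sin$, $\cos$, $\cosh$, and $\sinh$ terms.

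The main obstacle will be correctly setting up the recurrences, specifically getting the parity bookkeeping right: because the peak/valley constraints depend on the \emph{absolute} position within the permutation, inserting the largest element and splitting the permutation shifts the parities of everything to one side, so the two flanking factors are governed by different (odd- versus even-start) constraints. Keeping track of which auxiliary sequence each factor contributes to --- and verifying that the decomposition is a bijection that accounts for each permutation exactly once --- is the delicate step; once the coupled recurrences are established, deriving and solving the differential equations is routine. Finally, I would verify the closed form against the tabulated values $f(0),\dots,f(12)$ as a check.
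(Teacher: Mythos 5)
There is a genuine gap in your plan: the system of differential equations you propose has the wrong structure, and a system of that structure cannot produce \eqref{e-Fgf}. When you remove the largest element $n+1$ from a permutation counted by $f(n+1)$, the permutation splits into \emph{two} independent pieces (the letters before and after $n+1$), each of which is again a permutation of the restricted type (up to complementation). Choosing which letters go to the left piece gives a binomial convolution such as $\sum_{k}\binom{n}{2k}f(2k)f(n-2k)$, and convolutions of this kind translate into \emph{products} of exponential generating functions. So the resulting system is quadratic (Riccati-like), e.g.\ it contains equations of the shape $F_1'=F_2^2$ and $F_2'=F_1F_2+G_1$, not a first-order linear system $F'=a_{11}F+a_{12}G$, $G'=a_{21}F+a_{22}G$. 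Moreover, a linear constant-coefficient system is ruled out in principle: all its solutions are entire functions (linear combinations of exponentials), whereas $F$ is not entire --- it has a pole at $x=\alpha\approx 1.2998$, which is exactly what drives the asymptotics $f(n)/n!\sim 2\beta^{n+1}$ with $\beta=1/\alpha$. The quotient shape of \eqref{e-Fgf} reflects this Riccati character (a ratio whose denominator vanishes), not a ``characteristic roots'' phenomenon of a linear ODE.

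Two further points you would hit in carrying out the corrected plan. First, the parity bookkeeping forces more than one auxiliary sequence: besides $g(n)$ (such permutations ending with an ascent), one must split both $f$ and $g$ into even-indexed and odd-indexed parts, yielding four coupled quadratic ODEs in four unknowns $F_1,F_2,G_1,G_2$, with $F=F_1+F_2$. Second, solving that nonlinear system is not routine: there is no known way to integrate it directly, and the closed form is obtained by \emph{guessing} the solution (found empirically, via the Euler-number formula of Section \ref{s-3}) and then verifying with a computer algebra system that it satisfies the system and the initial conditions. So your final step ``solving the ODE and imposing initial data'' would stall unless you replace it with a verification of a conjectured solution, or take the entirely different route through noncommutative symmetric functions.
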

To prove Theorem \ref{t-m=00003D3}, we first obtain recurrence relations for the coefficients of $F(x)$.
\begin{lem}
Let $f(n)$ be the number of permutations in $\S_n$ with all valleys even
and all peaks odd, and let $g(n)$ be the number of such permutations
that end with an ascent. Then 
\begin{equation}
f(n+1)=\sum_{k=0}^{\left\lfloor \frac{n-1}{2}\right\rfloor }\binom{n}{2k}f(2k)f(n-2k)+\begin{cases}
f(n), & \text{if $n$ is even,}\\
g(n), & \text{if $n$ is odd,}
\end{cases}\label{e-recf}
\end{equation}
and 
\begin{equation}
g(n+1)=\sum_{k=0}^{\left\lfloor \frac{n-2}{2}\right\rfloor }\binom{n}{2k}f(2k)\left[f(n-2k)-g(n-2k)\right]
+\begin{cases}
  f(n), & \text{if $n$ is even,}\\
  g(n), & \text{if $n$ is odd,}
\end{cases}\label{e-recg}
\end{equation}
with $f(0)=g(0)=1$.\end{lem}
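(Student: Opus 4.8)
The plan is to classify the permutations counted by $f(n+1)$ and $g(n+1)$ according to the position $j$ of the largest entry $n+1$, and then to split each permutation into the block $L=\pi_1\cdots\pi_{j-1}$ lying to the left of $n+1$ and the block $R=\pi_{j+1}\cdots\pi_{n+1}$ lying to its right. The crucial first observation is that if $1<j<n+1$, then $j$ is a peak of $\pi$, so the hypothesis that all peaks are odd forces $j$ to be odd; the only other possibilities are the two endpoints $j=1$ and $j=n+1$. This immediately suggests reindexing by $j=2k+1$, which is exactly the indexing appearing in the stated sums.

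Next I would determine how the constraints transfer to $L$ and $R$. The left block $L$ occupies positions $1,\dots,j-1$, so its internal peaks and valleys sit at the same positions (and hence the same parities) as in a standalone permutation, and therefore its pattern must itself have all valleys even and all peaks odd, giving a factor $f(j-1)=f(2k)$. One must also check the boundary position $j-1$: if $L$ ends in a descent then $j-1$ is a valley, but since $j$ is odd this valley sits at the even position $j-1=2k$, so the constraint holds automatically. The right block $R$ is the subtle part: because $R$ begins at the odd-shifted position $j+1$, a peak (resp. valley) of $R$ as a standalone word corresponds to a peak at an even (resp. odd) position of $\pi$, so $R$ must have all peaks even and all valleys odd. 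Applying complementation $\sigma\mapsto\sigma^c$, which fixes positions but swaps peaks and valleys, shows that such words are equinumerous with permutations having all valleys even and all peaks odd, which yields the factor $f(n+1-j)=f(n-2k)$. Choosing which of the remaining $n$ entries form $L$ contributes $\binom{n}{j-1}=\binom{n}{2k}$, and summing over odd $j$ with $1\le j\le n$ produces the sum in \eqref{e-recf}, with $k=0$ recording the endpoint $j=1$.

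It then remains to account for the endpoint $j=n+1$, which I expect to be where the case distinction originates. Here $R$ is empty and $L=\pi_1\cdots\pi_n$ is a standalone permutation with all valleys even and all peaks odd; the only new condition is at position $n$, which is a valley exactly when $L$ ends in a descent. Since such a valley is admissible only when $n$ is even, this endpoint contributes $f(n)$ when $n$ is even, and when $n$ is odd only those $L$ ending in an ascent, namely $g(n)$, which is precisely the bracketed term in \eqref{e-recf}. For the companion recurrence \eqref{e-recg} I would run the same decomposition under the extra requirement that $\pi$ end in an ascent. The endpoint $j=n+1$ always ends in an ascent, since $\pi_n<\pi_{n+1}=n+1$, so it contributes the same case term. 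For $1\le j\le n$ the last two entries lie in $R$, so ``$\pi$ ends in an ascent'' becomes ``$R$ ends in an ascent,'' and under complementation this translates to ``$R^c$ ends in a descent,'' which is counted by $f(n-2k)-g(n-2k)$. The only wrinkle is $j=n$ (possible when $n$ is odd): there $R$ is a single entry preceded by $n+1$, so $\pi$ ends in a descent and this term drops out, shortening the range of summation to $\lfloor(n-2)/2\rfloor$. The main obstacle throughout is the careful bookkeeping of peak/valley parities at the three boundary positions $j-1$, $j+1$, and the final position, together with the parity-flip on $R$ that makes the complementation step necessary; once these are handled, both recurrences together with the base case $f(0)=g(0)=1$ follow directly.
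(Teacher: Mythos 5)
Your proposal is correct and follows essentially the same argument as the paper: remove the largest entry $n+1$, observe that an interior occurrence must sit at an odd peak position, count the left block by $f(2k)$ and the right block via complementation (swapping peaks and valleys), and handle the endpoint $j=n+1$ to produce the case term, with the same complementation and range-shortening observations yielding the recurrence for $g(n+1)$. Your boundary checks at positions $j-1$ and $n$ are in fact slightly more explicit than the paper's own write-up.
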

\begin{proof}
The initial conditions are true by convention. Let $\pi=\pi_{1}\pi_{2}\cdots\pi_{n+1}$ be an $\left(n+1\right)$-permutation
with all peaks odd and all valleys
even. We remove the largest letter $n+1$ to obtain two words with
distinct letters, the subword $\pi^{\prime}$ of $\pi$ of all letters
before $n+1$ and the subword $\pi^{\prime\prime}$ of $\pi$ of all
letters after $n+1$. Apply the standard reduction map to $\pi^{\prime}$
and $\pi^{\prime\prime}$, replacing the $i$th smallest letter in
each word by $i$, so that both words become permutations with the
order relations among the letters in each word preserved. By a slight
abuse of notation, call these permutations $\pi^{\prime}$ and $\pi^{\prime\prime}$. Let us consider  three cases:
\begin{itemize}
\item If $\pi_{1}=n+1$, then $\pi^{\prime}$ is empty and $\pi^{\prime\prime}$
is an $n$-permutation with all peaks even and all valleys odd, but
such $n$-permutations are in bijection (by complementation) with
$n$-permutations with all peaks odd and all valleys even. Hence,
there are $f(n)$ possibilities for $\pi^{\prime\prime}$ and thus for
$\pi$, which corresponds to the case $k=0$ in (\ref{e-recf}). 
\item Now, suppose that $\pi_{i}=n+1$, where $1<i<n+1$. Note that $i$
is a peak and so it must be odd, and it follows that $\pi^{\prime}$
is of length $2k$ for some $1\leq k\leq\left\lfloor \frac{n-1}{2}\right\rfloor $.
There are $\binom{n}{2k}$ ways to choose the letters of $\pi^{\prime}$,
and $f(2k)$ choices for $\pi^{\prime}$ given a set of $2k$ letters
because $\pi^{\prime}$ must also have all peaks odd and all valleys
even. Then $\pi^{\prime\prime}$ is composed of the remaining $n-2k$
letters, and can be formed in $f(n-2k)$ ways because $\pi^{\prime\prime}$
has all peaks even and all valleys odd. Hence, this corresponds to
the case $1\leq k\leq\left\lfloor \frac{n-1}{2}\right\rfloor $ in
(\ref{e-recf}). 
\item Finally, consider the case in which $\pi_{n+1}=n+1$. Then $\pi^{\prime\prime}$
is empty, and if $n$ is even, then $\pi^{\prime}$ can be any $n$-permutation
with all peaks odd and all valleys even. However, if $n$ is odd,
then $\pi^{\prime}$ must also have the property that it ends with
an ascent; otherwise, if it ends with a descent, then $\pi$ has a
valley in an odd position. Therefore, this gives us the final summand
in (\ref{e-recf}).
\end{itemize}
The reasoning for (\ref{e-recg}) is similar.
There are only two differences to explain. First, the $f(n-2k)$ term
is replaced with $f(n-2k)-g(n-2k)$ because $\pi^{\prime\prime}$
is an $(n-2k)$-permutation with all peaks even and all valleys odd and
ending with an ascent, which are in bijection with $(n-2k)$-permutations
with all peaks odd and all valleys even and ending with a descent.
Second, we sum up to $\left\lfloor \frac{n-2}{2}\right\rfloor $ rather
than $\left\lfloor \frac{n-1}{2}\right\rfloor $ because $n+1$ cannot
be the penultimate letter; otherwise, the permutation would end with
a descent rather than an ascent.
\end{proof}
Next we convert these two recurrences into differential
equations involving the exponential generating functions for $\left\{ f(n)\right\} _{n\geq0}$
and $\left\{ g(n)\right\} _{n\geq0}$. It is helpful to first
split these two sequences into odd parts and even parts.
\begin{lem}
Let $\left\{ f_{1}(n)\right\} _{n\geq0}$ be the sequence of odd terms
of $\left\{ f(n)\right\} _{n\geq0}$, let $\left\{ f_{2}(n)\right\} _{n\geq0}$
be the sequence of even terms of $\left\{ f(n)\right\} _{n\geq0}$, let
$\left\{ g_{1}(n)\right\} _{n\geq0}$ be the sequence of odd terms
of $\left\{ g(n)\right\} _{n\geq0}$, and let $\left\{ g_{2}(n)\right\} _{n\geq0}$
be the sequence of even terms of $\left\{ g(n)\right\} _{n\geq0}$.
That is, 
\[
f_{1}(n)=\begin{cases}
f(n), & \text{if $n$ is odd,}\\
0, & \text{if $n$ is even,}
\end{cases}
\]
\[
f_{2}(n)=\begin{cases}
0, &\text{if $n$ is odd,}\\
f(n), & \text{if $n$ is even,}
\end{cases}
\]
and similarly for $g_{1}\left(n\right)$ and $g_{2}\left(n\right)$.
Then the system of two recurrences in the previous lemma is equivalent
to the system of four recurrences 
\[
f_{1}(n+1)=\sum_{k=0}^{n-1}\binom{n}{k}f_{2}(k)f_{2}(n-k)+f_{2}(n),
\]
\[
f_{2}(n+1)=\sum_{k=0}^{n-1}\binom{n}{k}f_{2}(k)f_{1}(n-k)+g_{1}(n),
\]

\[
g_{1}(n+1)=\sum_{k=0}^{n-1}\binom{n}{k}f_{2}(k)\left[f_{2}(n-k)-g_{2}(n-k)\right]+f_{2}(n),
\]
and 
\[
g_{2}(n+1)=\sum_{k=0}^{n-2}\binom{n}{k}f_{2}(k)\left[f_{1}(n-k)-g_{1}(n-k)\right]+g_{1}(n)
\]
with $f_{1}(0)=g_{1}(0)=0$ and $f_{2}(0)=g_{2}(0)=1$.\end{lem}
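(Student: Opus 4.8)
The plan is to verify the equivalence by splitting each of the two original recurrences \eqref{e-recf} and \eqref{e-recg} according to the parity of $n$, since this parity determines where the left-hand sides land: when $n$ is even, $n+1$ is odd, so $f(n+1)=f_1(n+1)$ and $g(n+1)=g_1(n+1)$, and when $n$ is odd, $f(n+1)=f_2(n+1)$ and $g(n+1)=g_2(n+1)$. Thus the four new recurrences are exactly the four instances obtained from the two old ones by fixing the parity of $n$: the first and third are \eqref{e-recf} and \eqref{e-recg} for even $n$, and the second and fourth are \eqref{e-recf} and \eqref{e-recg} for odd $n$. The initial conditions $f_1(0)=g_1(0)=0$ and $f_2(0)=g_2(0)=1$ are immediate from $f(0)=g(0)=1$ since $0$ is even.

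The key observation is that each split sequence vanishes at indices of the wrong parity, so inserting factors such as $f_2(k)$ and $f_2(n-k)$ (or $f_1(n-k)$, and so on) into a summand automatically enforces that $k$ be even, with no need for the explicit indexing by $2k$. For instance, in the first new recurrence with $n$ even, the product $f_2(k)f_2(n-k)$ is nonzero only when $k$ is even, so upon writing $k=2j$ the unconstrained sum $\sum_{k=0}^{n-1}\binom{n}{k}f_2(k)f_2(n-k)$ collapses to $\sum_{j}\binom{n}{2j}f(2j)f(n-2j)$, which is precisely the sum in \eqref{e-recf}. The extra constant terms match as well: when $n$ is even the last term of \eqref{e-recf} is $f(n)=f_2(n)$, and when $n$ is odd it is $g(n)=g_1(n)$, with the analogous statement for \eqref{e-recg}.

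Finally I would confirm that for the opposite parity of $n$ the new recurrences hold trivially, so that nothing is gained or lost: for example, the first new recurrence at odd $n$ has left-hand side $f_1(n+1)=0$, while on the right each summand $f_2(k)f_2(n-k)$ would require $k$ and $n-k$ both even, which is impossible, and $f_2(n)=0$, so both sides vanish. The one point demanding care is the upper limit in the fourth recurrence, which is $n-2$ rather than $n-1$: matching the limit $\lfloor(n-2)/2\rfloor$ in \eqref{e-recg} for odd $n$ forces the index $j$ to run only up to $(n-3)/2$, i.e. even $k$ only up to $n-3$, and $n-2$ is the smallest clean upper bound achieving this. I expect this bookkeeping of the summation ranges to be the only genuine subtlety; the remainder is a routine parity case analysis.
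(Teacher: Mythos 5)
Your proposal is correct and follows essentially the same route as the paper: split each of \eqref{e-recf} and \eqref{e-recg} by the parity of $n$, and use the fact that $f_2$, $f_1$, $g_2$, $g_1$ vanish at indices of the wrong parity to convert the constrained sums over $2k$ into unconstrained sums over $k$ (the paper writes this out only for the first recurrence and says the others are analogous). Your additional checks --- that the recurrences hold trivially for the opposite parity of $n$, and that the upper limit $n-2$ in the fourth recurrence is what matches $\lfloor(n-2)/2\rfloor$ --- are exactly the details the paper leaves implicit.
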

\begin{proof}
We have 
\[
f_{1}(2n+1)=\sum_{k=0}^{n-1}\binom{2n}{2k}f_{2}(2k)f_{2}(2n-2k)+f_{2}(2n),
\]
and since $f_{2}(m)=0$ for $m$ odd, this is equivalent to 
\[
f_{1}(n+1)=\sum_{k=0}^{n-1}\binom{n}{k}f_{2}(k)f_{2}(n-k)+f_{2}(n).
\]
The other three recurrences are derived in the same way.
\end{proof}
Now, we convert this system of four recurrences into a system of four
differential equations using standard generating function methods.
\begin{lem}
Let $F_{1}$, $F_{2}$, $G_{1}$, and $G_{2}$ be the exponential
generating functions for the sequences $\left\{ f_{1}(n)\right\} _{n\geq0}$,
$\left\{ f_{2}(n)\right\} _{n\geq0}$, $\left\{ g_{1}(n)\right\} _{n\geq0}$,
and $\left\{ g_{2}(n)\right\} _{n\geq0}$. Then these
generating functions satisfy the system of differential equations
\begin{align*}
F_{1}^{\prime}(x)&=F_{2}^{2}(x),\\
F_{2}^{\prime}(x)&=F_{1}(x)F_{2}(x)+G_{1}(x),\\
G_{1}^{\prime}(x)&=F_{2}(x)\left[F_{2}(x)-G_{2}(x)\right]+F_{2}(x),\\
\shortintertext{and}
G_{2}^{\prime}(x)&=F_{2}(x)\left[F_{1}(x)-G_{1}(x)\right]+G_{1}(x)
\end{align*}
with initial conditions $F_{1}(0)=G_{1}(0)=0$ and\ $F_{2}(0)=G_{2}(0)=1$. 

Moreover, the solution of this system is given by
\[
F_{1}(x)=\frac{\sqrt{3}\sinh\left(\sqrt{3}x\right)+3\sin x}{3\cos x+4-\cosh\left(\sqrt{3}x\right)},
\]

\[
F_{2}(x)=\frac{2\sqrt{3}\sin\left(\frac{1}{2}x\right)\sinh\left(\frac{1}{2}\sqrt{3}x\right)+6\cos\left(\frac{1}{2}x\right)\cosh\left(\frac{1}{2}\sqrt{3}x\right)}{3\cos x+4-\cosh\left(\sqrt{3}x\right)},
\]

\[
G_{1}(x)=\frac{4\sqrt{3}\cos\left(\frac{1}{2}x\right)\sinh\left(\frac{1}{2}\sqrt{3}x\right)}{3\cos x+4-\cosh\left(\sqrt{3}x\right)},
\]
and 
\[
G_{2}(x)=\frac{6\cos\left(\frac{1}{2}x\right)\cosh\left(\frac{1}{2}\sqrt{3}x\right)+2\sqrt{3}\sin\left(\frac{1}{2}x\right)\sinh\left(\frac{1}{2}\sqrt{3}x\right)+2-2\cosh\left(\sqrt{3}x\right)}{3\cos x+4-\cosh\left(\sqrt{3}x\right)}.
\]
\end{lem}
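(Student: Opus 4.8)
The lemma makes two assertions, and I would prove them in turn: first, that the four recurrences are equivalent to the displayed system of differential equations, and second, that the displayed closed forms solve that system.

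For the equivalence, I would use the standard dictionary between operations on sequences and on their exponential generating functions. If $A(x)=\sum_n a(n)x^n/n!$ and $B(x)=\sum_n b(n)x^n/n!$, then the coefficient of $x^n/n!$ in $A(x)B(x)$ is the binomial convolution $\sum_{k=0}^n\binom{n}{k}a(k)b(n-k)$, while the coefficient of $x^n/n!$ in $A'(x)$ is $a(n+1)$. Applying this to each recurrence, the only subtlety is that the recurrences truncate the convolution sums (at $n-1$, or at $n-2$ for $g_2$), whereas the product of two generating functions produces the full sum up to $n$. I would reconcile this by checking that the omitted top terms are exactly accounted for: for $F_1'=F_2^2$ the $k=n$ term of the full convolution is $f_2(n)f_2(0)=f_2(n)$, which is precisely the extra summand $+f_2(n)$ in the recurrence; for $F_2'$ and $G_1'$ the $k=n$ terms vanish because $f_1(0)=0$ and $f_2(0)-g_2(0)=0$; and for $G_2'$ both the $k=n$ term (vanishing since $f_1(0)=g_1(0)=0$) and the $k=n-1$ term (vanishing since $f_1(1)=g_1(1)=1$) must be added back, which is exactly why that sum runs only to $n-2$. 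This bookkeeping, together with the initial values $f_1(0)=g_1(0)=0$ and $f_2(0)=g_2(0)=1$, yields the four differential equations.

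For the explicit solution I would argue by uniqueness rather than rediscover the closed forms. The recurrences determine $f_1,f_2,g_1,g_2$ uniquely, so the system has a unique formal power series solution; equivalently, the quadratic (hence analytic) vector field on the right-hand side, together with the initial data, determines a unique analytic solution near $0$ by Picard--Lindel\"of. It therefore suffices to verify that the four displayed functions (a) take the correct values at $x=0$ and (b) satisfy the four equations. Part (a) is immediate: writing $D=3\cos x+4-\cosh(\sqrt3\,x)$ for the common denominator, one has $D(0)=6$ while the four numerators evaluate at $0$ to $0,6,0,6$, giving $F_1(0)=G_1(0)=0$ and $F_2(0)=G_2(0)=1$.

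The substance is part (b), and this is where the main obstacle lies: after clearing $D$, each equation becomes an identity among products of trigonometric and hyperbolic functions, and reducing these to a canonical form is lengthy and error-prone. I would streamline it with a few observations. The half-angle identities $\cos x=1-2\sin^2(\tfrac12 x)$ and $\cosh(\sqrt3\,x)=1+2\sinh^2(\tfrac{\sqrt3}{2}x)$ give the factored form $D=6\cos^2(\tfrac12 x)-2\sinh^2(\tfrac{\sqrt3}{2}x)$, matching the half-angle arguments in the numerators. Next, $D'=-\bigl(3\sin x+\sqrt3\,\sinh(\sqrt3\,x)\bigr)$ is exactly the negative of the numerator of $F_1$, so $F_1=-D'/D=-(\log D)'$, and the equation $F_1'=F_2^2$ collapses to the single identity $(D')^2-D\,D''=N_2^2$, where $N_2$ is the numerator of $F_2$. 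Taking differences also helps decouple the quadratic terms: subtracting the $F_1$ and $G_1$ equations gives $(F_1-G_1)'=F_2(G_2-1)$, subtracting the $F_2$ and $G_2$ equations gives $(F_2-G_2)'=F_2G_1$, and one checks directly that $F_2-G_2=\bigl(2\cosh(\sqrt3\,x)-2\bigr)/D$, which simplifies these relations. Carrying out the remaining reductions by systematic use of product-to-sum and double-angle identities---or simply expanding everything in the exponentials $e^{\pm ix/2}$ and $e^{\pm\sqrt3\,x/2}$ and comparing---confirms all four equations. I expect the $F_2'$ and $G_2'$ equations to be the most tedious, since their nonlinear terms $F_1F_2$ and $F_2G_1$ mix the odd-argument and product-type numerators.
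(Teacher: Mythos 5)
Your proposal is correct and takes essentially the same route as the paper: translate the four recurrences into the ODE system via the standard binomial-convolution/derivative dictionary for exponential generating functions (your truncation bookkeeping---the omitted top terms either vanish because $f_1(0)=g_1(0)=0$, $f_2(0)=g_2(0)$, $f_1(1)=g_1(1)$, or supply the extra summand $f_2(n)$---is exactly the paper's observation, carried out for all four equations rather than just the first), and then check that the displayed closed forms satisfy the system with the right initial values. The only difference is in the final verification, which the paper delegates to Maple while you make the uniqueness argument explicit and sketch a correct hand computation (e.g.\ $F_1=-(\log D)'$ with $D=3\cos x+4-\cosh(\sqrt3\,x)$); this is a presentational refinement, not a different proof.
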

\begin{proof}
Let us take the first recurrence, 
\[
f_{1}(n+1)=\sum_{k=0}^{n-1}\binom{n}{k}f_{2}(k)f_{2}(n-k)+f_{2}(n),
\]
multiply both sides by $x^{n}/n!$, and sum over $n$. We obtain
\[
\sum_{n=0}^{\infty}f_{1}(n+1)\frac{x^{n}}{n!}=\sum_{n=0}^{\infty}\left[\sum_{k=0}^{n-1}\binom{n}{k}f_{2}(k)f_{2}(n-k)\right]\frac{x^{n}}{n!}+\sum_{n=0}^{\infty}f_{2}(n)\frac{x^{n}}{n!}.
\]
We know that $F_{1}^{\prime}(x)=\sum_{n=0}^{\infty}f_{1}(n+1)x^{n}/n!$
and that $F_{2}(x)=\sum_{n=0}^{\infty}f_{2}(n)x^{n}/n!$. Moreover,
\begin{align*}
\sum_{k=0}^{n-1}\binom{n}{k}f_{2}(k)f_{2}(n-k) 
 & =\sum_{k=0}^{n}\binom{n}{k}f_{2}(k)f_{2}(n-k)-f_{2}(n)
\end{align*}
so 
\[
\sum_{n=0}^{\infty}\left[\sum_{k=0}^{n-1}\binom{n}{k}f_{2}(k)f_{2}(n-k)\right]\frac{x^{n}}{n!}=F_{2}^{2}(x)-F_{2}(x).
\]
Hence,
we have 
$
F_{1}^{\prime}(x)=F_{2}^{2}(x).
$
The other three differential equations are obtained via the same method
using the other three recurrences. Then one can verify (e.g., using Maple or another computer algebra system) that the four generating
functions stated in the lemma indeed satisfy the system of differential
equations. 
\end{proof}

We don't know how to find the solution of this system of differential equations without
starting with a guess. We initially found \eqref{e-F2} empirically, and, as described in Section \ref{s-3}, derived \eqref{e-Fgf} from it. We can compute $F_1(x)$ and $F_2(x)$ from \eqref{e-Fgf} by bisection, and then compute $G_1(x)$ and $G_2(x)$ from the differential equations. The system is reminiscent a matrix Riccati equation but is not quite of that form. (Matrix Riccati equations sometimes arise in permutation enumeration problems; see Collins et al.~\cite{MR675349}.) 

Theorem \ref{t-m=00003D3} then follows from the fact that 
\begin{align*}
F(x) & =F_{1}(x)+F_{2}(x)\\
 & =\frac{3\sin\left(\frac{1}{2}x\right)+3\cosh\left(\frac{1}{2}\sqrt{3}x\right)}{3\cos\left(\frac{1}{2}x\right)-\sqrt{3}\sinh\left(\frac{1}{2}\sqrt{3}x\right)},
\end{align*}
which can, again, be verified by Maple.

\subsection{Asymptotics for $f(n)/n!$}

Next, we compute the asymptotics for $f(n)/n!$. We use the following result, which is Theorem IV.7 of \cite{Flajolet2009}. 
\begin{thm}[Exponential Growth Formula]
 Let $F(z)$ be analytic at 0, let $f(n)=\left[z^{n}\right]F(z)$,
and let $R$ be the modulus of a singularity nearest to the origin,
i.e., 
\[
R=\sup\left\{\,r\geq0 \mid \text{$F$ is analytic at all points of }0\leq z<r\,\right\} .
\]
Then for all $\epsilon>0$, there exists $N$ such that for all $n>N$,
we have 
\[
\left|f(n)\right|<\left(\frac{1}{\left|R\right|}+\epsilon\right)^{n}.
\]
Furthermore, for infinitely many $n$ we have 
\[
\left|f(n)\right|>\left(\frac{1}{\left|R\right|}-\epsilon\right)^{n}.
\]
 
\end{thm}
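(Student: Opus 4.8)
The plan is to reduce the statement to two classical facts about power series: first, that the quantity $R$ coincides with the radius of convergence $\rho$ of the Taylor series $\sum_{n} f(n)z^{n}$; and second, the Cauchy--Hadamard formula $1/\rho = \limsup_{n\to\infty}|f(n)|^{1/n}$. Granting these, both assertions of the theorem fall out directly from the definition of the limit superior, so the real content lies in identifying $R$ with $\rho$.

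To see that $R=\rho$, I would argue by two opposite inequalities. For $\rho\geq R$: if $r<R$ then, by definition of $R$ as a supremum, there is an admissible radius exceeding $r$, so $F$ is analytic on the open disk $|z|<r$; by the standard theory of holomorphic functions its Taylor series at $0$ converges throughout that disk, whence $\rho\geq r$, and letting $r\uparrow R$ gives $\rho\geq R$. For $\rho\leq R$: the series $\sum_{n}f(n)z^{n}$ converges on $|z|<\rho$ and there defines an analytic function $G$ which, agreeing with $F$ in a neighborhood of $0$, furnishes an analytic continuation of $F$ to the whole disk $|z|<\rho$; hence $F$ is analytic on $|z|<\rho$, and the definition of $R$ forces $R\geq\rho$. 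Combining, $R=\rho$. (Equivalently, this is the assertion that a power series must possess a singularity on its circle of convergence.)

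With $1/R=\limsup_{n}|f(n)|^{1/n}$ in hand, I read off the two bounds. For the upper bound, fix $\epsilon>0$; by the defining property of the limit superior there is an $N$ such that $|f(n)|^{1/n}<1/R+\epsilon$ for all $n>N$, and raising to the $n$th power yields $|f(n)|<(1/R+\epsilon)^{n}$. For the lower bound, the limit superior is the largest subsequential limit of $|f(n)|^{1/n}$, so the inequality $|f(n)|^{1/n}>1/R-\epsilon$ holds for infinitely many $n$, giving $|f(n)|>(1/R-\epsilon)^{n}$ infinitely often.

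The main obstacle is the identification $R=\rho$, and within it the inequality $\rho\leq R$: one must invoke that the sum of a convergent power series is analytic on its entire disk of convergence, so that this sum genuinely extends $F$ and prevents a singularity from lying strictly inside $|z|=\rho$. The remaining care is only with degenerate cases---an entire $F$ (where $R=\infty$ and $1/R$ is read as $0$) and the requirement that $\epsilon$ be small enough that $1/R-\epsilon$ is positive---neither of which affects the argument.
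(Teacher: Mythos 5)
Your proof is correct, and there is nothing in the paper to compare it against: the paper does not prove this theorem at all, but imports it as Theorem IV.7 of the cited Flajolet--Sedgewick reference, where the argument is essentially the one you give (identify $R$ with the radius of convergence, then apply Cauchy--Hadamard and the two defining properties of $\limsup$). You also correctly handle the one genuinely delicate point: the inequality $\rho\leq R$ only makes sense if ``$F$ is analytic on $|z|<r$'' is read in terms of analytic continuation of the germ of $F$ at $0$ --- i.e., a singularity is a boundary point past which no continuation exists --- and your use of the series sum $G$ as the continuation adopts exactly this (intended) interpretation; without it, the lower bound $|f(n)|>\left(1/R-\epsilon\right)^{n}$ for infinitely many $n$ would be false for, say, the restriction of an entire function to a small disk.
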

The exponential growth formula implies that the growth rate of
rate of the coefficients of a meromorphic function can be determined from the
location of its poles closest to the origin. We use this idea
to extract the asympotic data from our exponential generating function
$F$. 
\begin{thm}
\label{t-asm}Let $f(n)$ be the number of $n$-permutations with
all valleys even and all peaks odd. Then 
\[
\frac{f(n)}{n!}=2\beta^{n+1}+O\left(\delta^n \right), \mbox{ as }n\rightarrow\infty,
\]
where $\beta=0.7693323708\cdots$ and $\delta=0.3049639861\cdots$.\end{thm}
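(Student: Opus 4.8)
The plan is to read off the asymptotics of $f(n)/n! = [x^n]F(x)$ from the poles of $F$, regarded as a meromorphic function, via a transfer theorem for meromorphic functions (a refinement of the Exponential Growth Formula quoted above, such as Theorem~IV.10 of \cite{Flajolet2009}). Write $F = N/D$ with $N(x) = 3\sin(\tfrac12 x) + 3\cosh(\tfrac12\sqrt3 x)$ and $D(x) = 3\cos(\tfrac12 x) - \sqrt3\sinh(\tfrac12\sqrt3 x)$ as in \eqref{e-Fgf}. Both $N$ and $D$ are entire, so $F$ is meromorphic and its poles lie among the zeros of $D$.

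The reason the constant $2$ appears is the identity $D'(x) = -\tfrac12 N(x)$, obtained by differentiating $D$ directly. Hence at any simple zero $\rho$ of $D$ with $N(\rho)\neq 0$, the function $F$ has a simple pole with residue $N(\rho)/D'(\rho) = -2$, \emph{independent of} $\rho$. Since $[x^n]\,(-2)/(x-\rho) = 2\rho^{-(n+1)}$, every such pole contributes exactly $2\rho^{-(n+1)}$ to $f(n)/n!$.

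Next I would locate the poles. On the positive real axis $D'(x) = -\tfrac32\bigl(\sin(\tfrac12 x) + \cosh(\tfrac12\sqrt3 x)\bigr) < 0$, so $D$ is strictly decreasing there; together with $D(0) = 3 > 0$ and $D(x)\to -\infty$ this gives a unique positive real zero $\rho = 1.2998\ldots$ (so that $1/\rho = 0.7693\ldots = \beta$), which is simple and satisfies $N(\rho) > 0$. On the negative axis $D > 0$, so $\rho$ is the only real zero, and it contributes precisely $2\beta^{n+1}$. It remains to show that $\rho$ is the zero of smallest modulus and that the next zeros form a complex-conjugate pair $\sigma,\bar\sigma$ of modulus $1/\delta = 3.279\ldots$ (conjugacy holds because $D$ has real Taylor coefficients), with no zeros of modulus strictly between $\rho$ and $1/\delta$. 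Granting this, the transfer theorem applied on a disk of radius $R$ slightly larger than $1/\delta$ yields $f(n)/n! = 2\beta^{n+1} + 4\,\mathrm{Re}\bigl(\sigma^{-(n+1)}\bigr) + O(R^{-n})$, where the residues $-2$ at $\rho,\sigma,\bar\sigma$ produce the three displayed terms. Since $\bigl|4\,\mathrm{Re}(\sigma^{-(n+1)})\bigr| \le 4\delta^{n+1}$ and $R^{-n} = o(\delta^n)$, this collapses to $f(n)/n! = 2\beta^{n+1} + O(\delta^n)$.

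The main obstacle is the rigorous location of the complex zeros of $D$: proving that no complex zero has modulus below $\rho$ and that the subdominant zeros are exactly the conjugate pair at modulus $1/\delta$ (with $N(\sigma)\neq 0$). Because $D$ combines $\cos(\tfrac12 x)$ with $\sinh(\tfrac12\sqrt3 x)$ and $\cosh(\tfrac12\sqrt3 x)$, whose arguments differ by the irrational factor $\sqrt3$, the equation $D(x)=0$ does not reduce to an elementary root equation, so I would establish the count by the argument principle (or Rouch\'e's theorem) applied to circles of radius $\rho$ and $1/\delta$, certifying the numerical evaluation of the resulting contour integrals. Everything else—the residue evaluation and the passage from the pole data to the coefficient asymptotics—is routine.
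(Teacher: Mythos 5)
Your proposal is correct and follows essentially the same route as the paper: both read off the asymptotics from the poles of the meromorphic function $F=N/D$, with the dominant contribution $2\beta^{n+1}$ coming from the simple pole at the positive real zero $\alpha=1/\beta$ of $D$ and the error controlled by the conjugate pair of subdominant zeros at modulus $1/\delta$. Your identity $D'=-\tfrac12 N$ is exactly the paper's computation that $U'(\alpha)=-\tfrac12$ for $U=1/F$ (giving residue $1/U'(\alpha)=-2$), and both arguments ultimately rest on numerically certified zero locations (the paper simply cites Maple); your argument-principle certification and explicit subtraction of all three principal parts on a larger disk is, if anything, a slightly more careful version of the paper's step of replacing $\gamma^{-1}+\epsilon$ by $\gamma^{-1}$.
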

\begin{proof}
Let $F(z)$ be defined by \eqref{e-Fgf} as a function of a complex variable $z$. 
Let $D(z) ={3\cos(\frac{1}{2}z)-\sqrt{3}\sinh(\frac{1}{2}\sqrt{3}z)}$ be the denominator of $F(z)$. Then $D(z)$ is an entire function, so $F(z)$ is meromorphic on the entire complex plane.
Using Maple,  we find that the zero of $D(z)$ of smallest modulus is a simple zero at  
$z=\alpha:=1.299828316\cdots$  and the zeros with the second
smallest modulus are at $z=-2.058295121\cdots\pm(2.552598427\cdots)i$
with modulus $\gamma:=3.279075713\cdots$. 
Then we can write 
\[
F(z)=\frac{J(z)}{z-\alpha},
\]
where $J(z)$  is analytic in  $|z|<\gamma$. Then 
\[
\frac{f(n)}{n!}=\left[z^{n}\right]\frac{J(\alpha)}{z-\alpha}+\left[z^{n}\right]\left(F(z)-\frac{J(\alpha)}{z-\alpha}\right)
\]
and 
\[
K(z):=F\left(z\right)-\frac{J(\alpha)}{z-\alpha}
\]
is analytic at $z=\alpha$, where $J(\alpha)/(z-\alpha)$ is the principal
part of the Laurent expansion of $F(z)$ about $\alpha$. The singularities
of $K$ closest to the origin have modulus $\gamma$, so by the exponential
growth formula, we obtain 
\[
\left[z^{n}\right]\left(F(z)-\frac{J(\alpha)}{z-\alpha}\right)=O\left(\left(\gamma^{-1}+\epsilon\right)^{n}\right)
\]
as $n\rightarrow\infty$ for any $\epsilon>0$. But since the two singularities of $K(z)$ of smallest modulus are simple poles, we can replace $\gamma^{-1}+\epsilon$ with 
$\gamma^{-1}$. Thus
\begin{align}
\frac{f(n)}{n!} & =\left[z^{n}\right]\frac{J(\alpha)}{z-\alpha}+O\left(\gamma^{-n}\right)\nonumber \\
 & =\left[z^{n}\right]\left(-\frac{J(\alpha)/\alpha}{1-\frac{z}{\alpha}}\right)+O\left(\gamma^{-n}\right)\nonumber \\
 & =\left[z^{n}\right]\sum_{n=0}^{\infty}\left(-\frac{J(\alpha)}{\alpha^{n+1}}\right)z^{n}+O\left(\gamma^{-n}\right)\nonumber \\
 & =-\frac{J(\alpha)}{\alpha^{n+1}}+O\left(\gamma^{-n}\right)\label{e-asm1}
\end{align}
as $n\rightarrow\infty$. It remains to determine
$J(\alpha)$. Let $U(z)=1/F(z)$. We have that 
\[
J(z)=(z-\alpha)F(z),
\]
so 
\begin{align}
J(\alpha) & =\lim_{z\rightarrow\alpha}(z-\alpha)F(z)\nonumber \\
 & =\lim_{z\rightarrow\alpha}\frac{z-\alpha}{U(z)}\nonumber \\
 & =\frac{1}{U^{\prime}(\alpha)}\label{e-asm2}
\end{align}
by L'H\^opital's rule. Moreover, 
\[
U^{\prime}(z)=-\frac{3}{2}\cdot\frac{\left[3\cos\left(\frac{1}{2}z\right)-\sqrt{3}\sinh\left(\frac{1}{2}\sqrt{3}z\right)\right]\left[\cos\left(\frac{1}{2}z\right)+\sqrt{3}\sinh\left(\frac{1}{2}\sqrt{3}z\right)\right]}{\left[3\sin\left(\frac{1}{2}z\right)+3\cosh\left(\frac{1}{2}\sqrt{3}z\right)\right]^{2}}-\frac{1}{2},
\]
and since $3\cos\left(\frac{1}{2}\alpha\right)-\sqrt{3}\sinh\left(\frac{1}{2}\sqrt{3}\alpha\right)=0$,
it follows that 
\[
U^{\prime}(\alpha)=-\frac{1}{2}.
\]
Therefore, from (\ref{e-asm1}) and (\ref{e-asm2}) we obtain 
\[
\frac{f(n)}{n!}=\frac{2}{\alpha^{n+1}}+O\left(\gamma^{-n}\right),
\]
and applying the substitutions $\beta:=\alpha^{-1}$ and $\delta:=\gamma^{-1}$
completes the result.
\end{proof}

By including more poles of $F(z)$, we could get better approximations.

\section{Alternating Descents and Runs}
\label{s-3}

\subsection{The Euler number formula}

Next we show that $F(x)$, as given in \eqref{e-Fgf}, is equal to  
\begin{multline*}
\qquad\left(1-x+E_{3}\frac{x^{3}}{3!}-E_{4}\frac{x^{4}}{4!}+E_{6}\frac{x^{6}}{6!}-E_{7}\frac{x^{7}}{7!}+\cdots\right)^{-1}  =\\
\left[\sum_{n=0}^{\infty}\left(E_{3n}\frac{x^{3n}}{\left(3n\right)!}-E_{3n+1}\frac{x^{3n+1}}{\left(3n+1\right)!}\right)\right]^{-1},\qquad
\end{multline*}
where the Euler numbers $E_n$ are defined by $\sum_{n=0}^\infty E_n x^n\!/n! = \sec x +\tan x$.

Let $E(x)=\sec x+\tan x$. Then by multisection, we have
\[
\sum_{n=0}^{\infty}E_{3n}\frac{x^{3n}}{\left(3n\right)!}=\frac{1}{3}\left(E(x)+E(\omega x)+E(\omega^{2}x)\right)
\]
and 
\[
\sum_{n=0}^{\infty}E_{3n+1}\frac{x^{3n+1}}{\left(3n+1\right)!}=\frac{1}{3}\left(E(x)+\omega^{-1}E(\omega x)+\omega^{-2}E(\omega^{2}x)\right),
\]
where $\omega$ is the primitive cube root of unity $e^{2\pi i/3}$.
It can then be verified, for example by using Maple, that 
\begin{align}
F(x) & =\frac{3\sin\left(\frac{1}{2}x\right)+3\cosh\left(\frac{1}{2}\sqrt{3}x\right)}{3\cos\left(\frac{1}{2}x\right)-\sqrt{3}\sinh\left(\frac{1}{2}\sqrt{3}x\right)}\nonumber \\
 & =\left[\frac{1}{3}\left(E(x)+E(\omega x)+E(\omega^{2}x)\right)-\frac{1}{3}\left(E(x)+\omega^{-1}E(\omega x)+\omega^{-2}E(\omega^{2}x)\right)\right]^{-1}\nonumber \\
 & =\left[\sum_{n=0}^{\infty}\left(E_{3n}\frac{x^{3n}}{\left(3n\right)!}-E_{3n+1}\frac{x^{3n+1}}{\left(3n+1\right)!}\right)\right]^{-1}.\label{e-FE}
\end{align}

Thus we see that \eqref{e-FE} is the exponential generating function for permutations 
with all valleys even and all peaks odd. 

In the next sections, we give a more direct proof of this fact.

\subsection{Permutations with prescribed descent sets and alternating descent sets}
First we discuss a connection between counting permutations by descent set and counting permutations by alternating descent set. It is often convenient to represent descent sets of permutations of $[n]$ by compositions of $n$, i.e., sequences of positive integers with sum $n$. To a subset $S\subseteq [n-1]$ with elements $s_1<s_2<\cdots <s_k$ we  associate  the composition $C(S) = (s_1, s_2-s_1,\dots, s_j-s_{j-1}, n-s_j)$ of $n$. We define the mapping $D$ from compositions of $n$ to subsets of $[n-1]$  given by the formula $D(L) = \{L_1, L_1+L_2,\dots,L_1+\cdots+L_{k-1}\}$, where $L=(L_1,L_2,\dots, L_k)$. Then $C$ and $D$ are inverse bijections.

We partially order the compositions of $n$ by reverse refinement,
so if $L=\left(L_{1},\dots,L_{k}\right)$ then $L$ covers $M$ if
and only if $M$ can be obtained from $L$ by replacing two consecutive
parts $L_{i},L_{i+1}$ with $L_{i}+L_{i+1}$. Thus, for example, $\left(7,6\right)<\left(1,2,4,5,1\right)$.
Then $C$ and $D$ are order-preserving bijections.

If a permutation $\pi$ in $\S_n$ has descent set $S$, then we call $C(S)$ the \emph{descent composition} of $\pi$, and we define the \emph{alternating descent composition} of $\pi$ analogously.

If $L=(L_1,\dots, L_k)$ is a composition of $n$ then we write $\binom nL$ for the multinomial coefficient $\binom{n}{L_1,\dots, L_k}$, and we write $\binomE nL$ for $\binom{n}{L}E_{L_1}\cdots E_{L_k}$.

\begin{lem} 
\label{l-7}
Let $L$ be a composition of $n$. Then the number of permutations of $[n]$ with descent set contained in $D(L)$ is the multinomial coefficient $\binom{n}{L}$ and the number of  permutations of $[n]$ with alternating descent set contained in  $D(L)$  is
$\binomE{n}{L}$
\end{lem}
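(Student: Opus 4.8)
The plan is to prove both counting statements in Lemma~\ref{l-7} in parallel, since they are formally identical once we account for the replacement of $x^n/n!$ by $E_n x^n\!/n!$. First I would establish the multinomial count for ordinary descents, which is the classical fact underlying the whole setup. Given a composition $L=(L_1,\dots,L_k)$ of $n$, a permutation $\pi$ has descent set contained in $D(L)$ precisely when $\pi$ is increasing on each of the $k$ consecutive blocks of lengths $L_1,\dots,L_k$ determined by $D(L)$. To build such a permutation, I would distribute the $n$ letters $\{1,\dots,n\}$ among the $k$ blocks: choosing which $L_i$ letters go into the $i$th block can be done in $\binom{n}{L_1,\dots,L_k}=\binom nL$ ways, and once the letters of a block are chosen there is exactly one way to arrange them increasingly. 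This gives the count $\binom nL$.

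Next I would treat the alternating case by exactly the same block decomposition, but reinterpreting ``increasing on a block'' as ``having no alternating descent within the block.'' A permutation $\pi$ has alternating descent set contained in $D(L)$ iff, restricted to each block of length $L_i$, it has no alternating descents at all; by the definition of alternating runs given in the introduction, this means each block, after standardization, is an alternating run, hence (depending on the parity of its starting position) either an alternating or a reverse-alternating permutation of its $L_i$ letters. The key point is that the number of such arrangements of any fixed set of $L_i$ letters is $E_{L_i}$, since the Euler number $E_m$ counts both alternating and reverse-alternating permutations of length $m$ (as noted in the introduction via complementation). Distributing the $n$ letters among the blocks in $\binom nL$ ways and then filling each block in $E_{L_i}$ ways gives $\binom nL E_{L_1}\cdots E_{L_k}=\binomE nL$, as claimed.

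The main subtlety I would watch is the parity bookkeeping for the alternating runs. Within the global permutation $\pi$, a block need not start at an odd position, so its standardization might be required to be alternating rather than reverse-alternating. This is exactly where one must confirm that the count is \emph{insensitive} to the starting parity: because $E_m$ counts alternating and reverse-alternating permutations equally, each block contributes $E_{L_i}$ regardless of whether it begins at an even or odd global position, so the product $E_{L_1}\cdots E_{L_k}$ is unaffected. I would make this explicit, since it is the one place where the alternating argument genuinely diverges from the ordinary one and where a careless reader might worry that the parity of block boundaries matters. Everything else is a direct parallel, so the bulk of the proof is simply verifying that ``contained in $D(L)$'' translates correctly into the block-freeness condition and that the standardization of a block preserves its alternating-descent pattern.
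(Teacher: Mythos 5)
Your proof is correct and follows essentially the same route as the paper: decompose $[n]$ into blocks of sizes $L_1,\dots,L_k$ in $\binom{n}{L}$ ways, then arrange each block increasingly (one way) for the ordinary case, or as an alternating/reverse-alternating sequence according to the parity of its starting position ($E_{L_i}$ ways either way, by complementation) for the alternating case. Your explicit attention to the parity-insensitivity of the count $E_{L_i}$ is exactly the point the paper handles with the phrase ``according to the parity of the starting position.''
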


\begin{proof}
Let $L=(L_1,\dots, L_k)$.
To prove the first formula, we construct a bijection from the set of ordered partitions with block sizes $L_1,\dots, L_k$ to the set of permutations of $[n]$ with descent set contained in $D(L)$.
Given an ordered partition $(B_1,\dots, B_k)$ of $[n]$ into blocks of sizes $L_1,\dots, L_k$, 
let $B_i\up$ be the word obtained by arranging the elements of $B_i$ in increasing order. Then the map that takes 
$(B_1,\dots,B_k)$ to the concatenation $B_1\up\cdots B_k\up$ is easily seen to be the required bijection.

For the second formula, we proceed in the same way, but instead of arranging the elements of each block in increasing order, we arrange them into either an alternating or reverse-alternating permutation, according to the parity of the starting position, so as to create a segment of the permutation with no alternating descents.
\end{proof}

We denote by $l(L)$ the number of parts of the composition $L$.

\begin{lem}
For $L\vDash n$, let $\beta(L)$ be the number of permutations of $[n]$ with descent composition $L$ and let $\hat\beta(L)$ be the number of permutations of $[n]$ with alternating descent composition $L$. Then 
\begin{align}
\label{e-beta} 
\beta(L) = \sum_{K\le L} (-1)^{l(L) - l(K)}\binom{n}K\\
\shortintertext{and}
\label{e-betahat}
\hat\beta(L) = \sum_{K\le L} (-1)^{l(L) - l(K)}\binomE{n}K\
\end{align}
\end{lem}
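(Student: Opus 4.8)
The plan is to prove both formulas \eqref{e-beta} and \eqref{e-betahat} simultaneously by Möbius inversion over the poset of compositions of $n$ ordered by reverse refinement. The two statements have identical combinatorial structure; the only difference is whether we count by descent composition or alternating descent composition, so I would set up the argument once and specialize at the end.

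First I would recast Lemma \ref{l-7} in terms of the quantities $\beta$ and $\hat\beta$. Counting permutations by ``descent set contained in $D(L)$'' is the same as summing over all permutations whose descent composition refines $L$, i.e. whose descent composition $K$ satisfies $K\le L$. Thus Lemma \ref{l-7} can be restated as
\begin{equation*}
\sum_{K\le L}\beta(K) = \binom{n}{L}\qquad\text{and}\qquad \sum_{K\le L}\hat\beta(K) = \binomE{n}{L}.
\end{equation*}
In other words, $\binom{n}{L}$ and $\binomE{n}{L}$ are the zeta-transforms (cumulative sums up the order) of $\beta$ and $\hat\beta$ respectively. The desired formulas \eqref{e-beta} and \eqref{e-betahat} are then exactly the Möbius-inverted statements, expressing $\beta(L)$ and $\hat\beta(L)$ back in terms of the cumulative sums.

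The key step is therefore to identify the Möbius function of the composition poset under reverse refinement. Since $C$ and $D$ are order-preserving bijections between compositions of $n$ and subsets of $[n-1]$, this poset is isomorphic to the Boolean lattice on $[n-1]$, whose Möbius function is $\mu(S,T) = (-1)^{|T|-|S|}$. Translating through $D$, the interval from $K$ to $L$ (with $K\le L$) has Möbius value $(-1)^{l(K)-l(L)}$, because refining a composition increases the number of parts by exactly the number of new cut points, and $l$ corresponds to one plus the number of elements of the associated subset. Hence $\mu(K,L) = (-1)^{l(L)-l(K)}$. Applying Möbius inversion to each of the two zeta-transform identities above yields \eqref{e-beta} and \eqref{e-betahat} directly, with the sign $(-1)^{l(L)-l(K)}$ appearing just as stated.

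I expect the main obstacle to be purely bookkeeping: verifying carefully that the Möbius function of the reverse-refinement order is $(-1)^{l(L)-l(K)}$ on the interval $[K,L]$, and confirming that the direction of the inequality matches the convention in the excerpt (here $K\le L$ means $L$ refines $K$, so that the finest compositions are largest). Once the order-isomorphism with the Boolean lattice is invoked via the bijections $C$ and $D$, the sign computation is immediate and no delicate estimates are required; the bulk of the work is simply ensuring the refinement direction and the part-count parity are aligned correctly.
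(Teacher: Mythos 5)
Your proposal is correct and takes essentially the same approach as the paper: the paper's proof also restates Lemma \ref{l-7} as $\binom{n}{L}=\sum_{K\le L}\beta(K)$ and $\binomE{n}{L}=\sum_{K\le L}\hat\beta(K)$ and then concludes by inclusion--exclusion, which is exactly the M\"obius inversion over the Boolean lattice (via the order-isomorphisms $C$ and $D$) that you carry out explicitly. Your sign computation $\mu(K,L)=(-1)^{l(L)-l(K)}$ is the detail the paper leaves implicit, and it is correct.
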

\begin{proof}
By Lemma \ref{l-7}, $\binom{n}{L} = \sum_{K\le L }\beta(K)$ and $\binomE{n}{L} = \sum_{K\le L}\hat\beta(K)$. The results then follow by inclusion-exclusion.
\end{proof}
Formula \eqref{e-beta}  is due to MacMahon \cite[Volume 1, p.~190]{macmahon} and formula \eqref{e-betahat} is due to Chebikin \cite{chebikin}. 

Note that $\binom{n}{L}$ is the coefficient of $x^n\!/n!$ in 
\begin{equation*}
\frac{x^{L_1}}{L_1!}\cdots \frac{x^{L_k}}{L_k!}
\end{equation*}
and  $\binomE{n}{L}$ is the coefficient of $x^n\!/n!$ in 
\begin{equation*}
E_{L_1}\frac{x^{L_1}}{L_1!}\cdots E_{L_k} \frac{x^{L_k}}{L_k!}.
\end{equation*}

This suggests that if we have an exponential generating function for counting permutations weighted in some way by their descent sets  then we might find an analogous formula for alternating descent sets by replacing $x^n\!/n!$ with $E_n x^n\!/n!$ everywhere. We will make this idea precise in the next section by using noncommutative symmetric functions.

\section{Noncommutative Symmetric Functions}

\subsection{Definitions}

In this section we give an introduction to some of elements of the theory of noncommutative symmetric functions, which has been extensively developed in \cite{ncsf1} and many further publications. We note that our definitions and notation are different from those used in~\cite{ncsf1}, but the algebras are the same.

Although our main results could be proved using ordinary commutative symmetric functions,  noncommutative symmetric functions provide a more natural setting.

Let $F$ be a field of characteristic zero. We work in the algebra $F\langle\langle X_1, X_2,\dots \rangle\rangle$ of formal power series in noncommuting variables $X_1, X_2,\dots$, grading by degree in the $X_i$.
Let 
\[
\mathbf{h}_{n}=\sum_{i_{1}\leq\cdots\leq i_{n}}X_{i_{1}}X_{i_{2}}\cdots X_{i_{n}}.
\]
These
are noncommutative versions of the complete  symmetric
functions $h_{n}$ (see \cite[Chapter 7]{Stanley2001}). For any composition $L$, let $\h_L = \h_{L_1}\cdots \h_{L_k}$.

The algebra $\Sym$ of \emph{noncommutative symmetric functions} with  coefficients in $F$ is the subalgebra of $F\langle\langle X_1, X_2,\dots \rangle\rangle$ consisting of all (possibly infinite) linear combinations of the $\h_L$ with coefficients in $F$.
We denote by $\Sym_n$ the vector space of noncommutative symmetric functions homogeneous of degree $n$, so $\Sym_n$ is the span of  the set $\{\h_L\}_{L\vDash n}$, where $L\vDash n$ means that $L$ is a  composition of~$n$.
We define the noncommutative symmetric functions $\r_L$ by 
\[
\mathbf{r}_{L}=\sum_{L}X_{i_{1}}X_{i_{2}}\cdots X_{i_{n}}
\]
where the sum is over all $\left(i_{1},\dots,i_{n}\right)$  satisfying
\begin{equation}
\label{e-descomp}
\underset{L_{1}}{\underbrace{i_{1}\leq\cdots\leq i_{L_{1}}}}>\underset{L_{2}}{\underbrace{i_{L_{1}+1}\leq\cdots\leq i_{L_{1}+L_{2}}}}>\cdots>\underset{L_{k}}{\underbrace{i_{L_{1}+\cdots+L_{k-1}+1}\leq\cdots\leq i_{n}}}.
\end{equation}

It is clear that 
\begin{equation}
\label{e-htor}
\mathbf{h}_{L}=\sum_{K\leq L}\mathbf{r}_{K},
\end{equation}
so by inclusion-exclusion, 
\begin{equation}
\mathbf{r}_{L}=\sum_{K\leq L}(-1)^{l(L)-l(K)}\mathbf{h}_{K},\label{e-r}
\end{equation}
The set $\left\{ \mathbf{r}_{L}\right\} _{L\vDash n}$  is linearly independent,
since for $L\neq M$, $\mathbf{r}_{L}$ and $\mathbf{r}_{M}$ have
no terms in common. It follows from (\ref{e-htor}) that $\left\{ \mathbf{r}_{L}\right\} _{L\vDash n}$ 
spans $\Sym _n$, so $\left\{ \mathbf{r}_{L}\right\} _{L\vDash n}$
and  $\left\{ \mathbf{h}_{L}\right\} _{L\vDash n}$
are both bases for $\Sym_n$.

\subsection{Homomorphisms}
We define a homomorphism $\Phi$ from $\Sym$ to $F[[x]]$ by $\Phi(\h_n) = x^n\!/n!$. 
Then if $L$ is a composition of $n$ we have
\begin{equation*}
\Phi(\h_L) = \frac{x^{L_1}}{L_1!}\cdots\frac{x^{L_k}}{L_k!} = \binom{n}{L} \frac{x^n}{n!}.
\end{equation*}

Another description of $\Phi$, which we will not need for our proofs, but which motivates the definition, is that if $f$ is a noncommutative symmetric function then the coefficient of $x^n\!/n!$ in $\Phi(f)$ is the coefficient of $x_1 x_2\cdots x_n$ in the result of replacing $X_1,X_2,\dots$ with commuting variables $x_1, x_2, \dots$.

\begin{lem}
\label{l-Phi}
If $L\vDash n$ then $\Phi(\r_L) = \beta(L)\, x^n\!/n!$.
\end{lem}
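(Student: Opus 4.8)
The plan is to relate the homomorphism $\Phi$ applied to $\r_L$ back to $\Phi$ applied to the $\h_K$ using the already-established expansion \eqref{e-r}, and then invoke the combinatorial interpretation of $\beta(L)$ from formula \eqref{e-beta}. First I would recall that $\Phi$ is defined as an algebra homomorphism, so it is in particular linear. Applying $\Phi$ to both sides of the identity $\mathbf{r}_{L}=\sum_{K\leq L}(-1)^{l(L)-l(K)}\mathbf{h}_{K}$ from \eqref{e-r} and using linearity gives
\begin{equation*}
\Phi(\r_L) = \sum_{K\le L} (-1)^{l(L)-l(K)}\Phi(\h_K).
\end{equation*}

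Next I would substitute the computed value of $\Phi$ on the $\h_K$ basis. Since $\Phi(\h_K) = \binom{n}{K}\,x^n\!/n!$ for each composition $K$ of $n$ (as recorded just after the definition of $\Phi$, and noting that every $K\le L$ is also a composition of $n$ because reverse refinement preserves the total sum), the right-hand side becomes
\begin{equation*}
\Phi(\r_L) = \left(\sum_{K\le L} (-1)^{l(L)-l(K)}\binom{n}{K}\right)\frac{x^n}{n!}.
\end{equation*}
The parenthesized sum is exactly the expression for $\beta(L)$ given in \eqref{e-beta}, so this immediately yields $\Phi(\r_L) = \beta(L)\,x^n\!/n!$, which is the claim.

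This proof is essentially a bookkeeping composition of two facts already proved in the excerpt, so there is no genuine obstacle; the only point demanding a moment of care is confirming that the indexing matches — that the same poset of compositions under reverse refinement underlies both \eqref{e-r} (the $\h$-to-$\r$ change of basis in $\Sym_n$) and \eqref{e-beta} (MacMahon's inclusion-exclusion formula for $\beta$), and that the sign conventions $(-1)^{l(L)-l(K)}$ and the coefficients $\binom{n}{K}$ agree term by term. Both expansions sum over $K\le L$ with the identical Möbius-type sign, which is no accident: they are the same inclusion-exclusion on the same lattice, with $\Phi$ transporting the $\r_L$ decomposition of $\Sym$ onto the $\beta(L)$ decomposition of descent-class counts. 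I would flag that this lemma is the precise bridge making rigorous the heuristic stated earlier, and that the analogous statement for alternating descents will follow by replacing $\Phi$ with a second homomorphism sending $\h_n$ to $E_n x^n\!/n!$, so I would expect the paper to define such a map next.
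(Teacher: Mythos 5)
Your proof is correct and follows exactly the paper's own argument: apply $\Phi$ to the expansion \eqref{e-r}, use linearity and $\Phi(\h_K)=\binom{n}{K}\,x^n\!/n!$, and identify the resulting sum with $\beta(L)$ via \eqref{e-beta}. Your closing remarks about the parallel homomorphism $\hat\Phi(\h_n)=E_n x^n\!/n!$ also match what the paper does immediately afterward.
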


\begin{proof}
We have
\begin{align*}
\Phi(\r_L) &= \Phi\biggl(\sum_{K\leq L}(-1)^{l(L)-l(K)}\mathbf{h}_{K}\biggr), \text{ by \eqref{e-r},} \\
  &=\sum_{K\leq L}(-1)^{l(L)-l(K)}\Phi(\mathbf{h}_{K})\\
  &=\sum_{K\leq L}(-1)^{l(L)-l(K)}\binom{n}{K}\frac{x^n}{n!}\\
  &=\beta(L)\frac{x^n}{n!}, \text{ by \eqref{e-beta}}.
\end{align*}
\end{proof}

We define another homomorphism $\hat\Phi: \Sym\to F[[x]]$ by $\hat\Phi(\h_n)=E_n x^n\!/n!$. Then the following formula for $\hat\Phi(\r_n)$ is proved in exactly the same way as Lemma \ref{l-Phi}:

\begin{lem}
\label{l-Phihat}
If $L\vDash n$ then $\hat\Phi(\r_L) = \hat\beta(L)\, x^n\!/n!$.\qed
\end{lem}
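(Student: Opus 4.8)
The plan is to mirror exactly the proof of Lemma \ref{l-Phi}, substituting the homomorphism $\hat\Phi$ for $\Phi$ and the $E$-weighted quantities for their unweighted counterparts at each step. The whole argument is a formal computation that uses only three ingredients: the expansion \eqref{e-r} of $\r_L$ in terms of the $\h_K$, the defining value of $\hat\Phi$ on the generators $\h_n$, and the inclusion-exclusion formula \eqref{e-betahat} for $\hat\beta(L)$. The key point is that none of the structural steps in Lemma \ref{l-Phi} actually depended on the particular values $\Phi(\h_n) = x^n\!/n!$; they relied only on multiplicativity of the homomorphism and on the matching inclusion-exclusion identity.

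Concretely, I would first apply $\hat\Phi$ to both sides of \eqref{e-r}, writing
\[
\hat\Phi(\r_L) = \hat\Phi\biggl(\sum_{K\le L}(-1)^{l(L)-l(K)}\h_K\biggr).
\]
Next I would pull the sum out through $\hat\Phi$ by linearity, and then evaluate $\hat\Phi(\h_K)$ for a composition $K = (K_1,\dots,K_j)$ of $n$. Since $\hat\Phi$ is an algebra homomorphism and $\h_K = \h_{K_1}\cdots\h_{K_j}$, we get
\[
\hat\Phi(\h_K) = \hat\Phi(\h_{K_1})\cdots\hat\Phi(\h_{K_j}) = E_{K_1}\frac{x^{K_1}}{K_1!}\cdots E_{K_j}\frac{x^{K_j}}{K_j!} = \binomE{n}{K}\frac{x^n}{n!},
\]
which is precisely the $E$-analogue of the computation $\Phi(\h_K) = \binom nK x^n\!/n!$ recorded just before Lemma \ref{l-Phi}. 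Substituting this in and recognizing the resulting sum $\sum_{K\le L}(-1)^{l(L)-l(K)}\binomE nK$ as $\hat\beta(L)$ via \eqref{e-betahat} yields $\hat\Phi(\r_L) = \hat\beta(L)\,x^n\!/n!$, as desired.

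There is essentially no obstacle here, which is exactly why the excerpt says the result is ``proved in exactly the same way.'' The only two facts one must confirm are that $\hat\Phi$ is genuinely a homomorphism (so that it respects the product $\h_K = \prod_i \h_{K_i}$) and that the inclusion-exclusion identity used is \eqref{e-betahat} rather than \eqref{e-beta}. Both are already in hand: $\hat\Phi$ is defined as a homomorphism on the generators $\h_n$, and \eqref{e-betahat} is the $E$-weighted counterpart of \eqref{e-beta} established earlier. Thus the entire proof is a line-by-line transcription of Lemma \ref{l-Phi}'s proof with $\Phi \mapsto \hat\Phi$, $\binom nK \mapsto \binomE nK$, and $\beta \mapsto \hat\beta$.
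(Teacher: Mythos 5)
Your proof is correct and matches the paper exactly: the paper itself proves Lemma \ref{l-Phihat} only by remarking that it is ``proved in exactly the same way as Lemma \ref{l-Phi},'' and your argument---applying $\hat\Phi$ to \eqref{e-r}, using linearity and the homomorphism property to get $\hat\Phi(\h_K)=\binomE{n}{K}\,x^n/n!$, and then invoking \eqref{e-betahat}---is precisely that intended transcription.
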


Lemmas \ref{l-Phi} and \ref{l-Phihat} explain the connection between generating functions for descents and for alternating descents: If an exponential generating function for counting permutations by descents can be obtained by applying $\Phi$ to a noncommutative symmetric function, then applying $\hat\Phi$ will give an analogous generating function for counting permutations by alternating descents. 

\subsection{Counting words by runs}
The next result gives a very general noncommutative symmetric generating function to which we can apply $\Phi$ and $\hat\Phi$.
It is Theorem 5.2 of Gessel \cite{gessel-thesis} and is also a
noncommutative version of a special case of Theorem 4.2.3 of Goulden
and Jackson \cite{MR702512}. See also Jackson and Aleliunas \cite[Theorem 4.1]{MR0450080}.
\begin{thm}
\label{t-runs}Let $w_{1},w_{2},\dots$ be arbitrary commuting weights
and define $a_{0}=1,a_{1},a_{2},\dots$ by 
\begin{equation}
\sum_{n=0}^{\infty}a_{n}z^{n}=\biggl(\sum_{n=0}^{\infty}w_{n}z^{n}\biggr)^{-1},\label{e-cx}
\end{equation}
where $w_{0}=1$. Then 
\[
\sum_{L}w_{L}\mathbf{r}_{L}=\biggl(\sum_{n=0}^{\infty}a_{n}\mathbf{h}_{n}\biggr)^{-1}
\]
where the sum on the left is over all compositions $L$, and $w_{L}=w_{L_{1}}\cdots w_{L_{k}}$ where $L=\left(L_{1},\dots,L_{k}\right)$.
\end{thm}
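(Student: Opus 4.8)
The plan is to prove the identity by showing that
\[
R:=\sum_{L}w_{L}\mathbf{r}_{L}
\]
(the sum over \emph{all} compositions $L$, including the empty one, with the conventions $w_{\emptyset}=1$ and $\mathbf{r}_{\emptyset}=1$) is a right inverse of $A:=\sum_{n\ge0}a_{n}\mathbf{h}_{n}$. Since $a_{0}=1$, we have $A=1+(\text{terms of positive degree})$, which is invertible in the degree-completion of $\Sym$; a one-sided inverse of an invertible element is automatically its two-sided inverse, so establishing $AR=1$ yields $R=A^{-1}$, which is the assertion. I work with $A$ on the left and $R$ on the right throughout, because $\Sym$ is noncommutative.

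The computational engine is a single multiplication rule, which I would derive directly from the definitions rather than citing a general ribbon-product formula. Note first that \eqref{e-htor} gives $\mathbf{h}_{n}=\mathbf{r}_{(n)}$, since $(n)$ is the smallest composition of $n$. Then, expanding $\mathbf{h}_{n}\mathbf{r}_{M}$ inside $F\langle\langle X_{1},X_{2},\dots\rangle\rangle$ and splitting each term according to whether the junction between the weakly increasing block coming from $\mathbf{h}_{n}$ and the first block of $\mathbf{r}_{M}$ is a descent, one obtains for nonempty $M=(M_{1},\dots,M_{k})$
\[
\mathbf{h}_{n}\mathbf{r}_{M}=\mathbf{r}_{(n,M_{1},\dots,M_{k})}+\mathbf{r}_{(n+M_{1},M_{2},\dots,M_{k})},
\]
where the first term arises from a descent at the junction and the second from no descent, which merges the two blocks. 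The degenerate cases $\mathbf{h}_{n}\mathbf{r}_{\emptyset}=\mathbf{r}_{(n)}$ and $\mathbf{h}_{0}=1$ round out the rule.

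With this in hand I would expand $AR=\sum_{n\ge0}\sum_{M}a_{n}w_{M}\,\mathbf{h}_{n}\mathbf{r}_{M}$ and read off the coefficient of each basis element $\mathbf{r}_{N}$. The coefficient of $\mathbf{r}_{\emptyset}$ is $a_{0}w_{\emptyset}=1$. For a nonempty target $N=(N_{1},\dots,N_{p})$ there are exactly three sources of $\mathbf{r}_{N}$: the $n=0$ summand (contributing $w_{N}$), the concatenation term with $n=N_{1}$ and $M=(N_{2},\dots,N_{p})$ (contributing $a_{N_{1}}w_{N_{2}}\cdots w_{N_{p}}$), and the near-concatenation terms with $1\le n\le N_{1}-1$ and $M=(N_{1}-n,N_{2},\dots,N_{p})$ (contributing $a_{n}w_{N_{1}-n}w_{N_{2}}\cdots w_{N_{p}}$). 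Pulling out the common factor $w_{N_{2}}\cdots w_{N_{p}}$, the coefficient becomes $\bigl(\sum_{n=0}^{N_{1}}a_{n}w_{N_{1}-n}\bigr)w_{N_{2}}\cdots w_{N_{p}}$, and the first factor is the coefficient of $z^{N_{1}}$ in $\bigl(\sum a_{n}z^{n}\bigr)\bigl(\sum w_{n}z^{n}\bigr)=1$ by \eqref{e-cx}, hence $0$ since $N_{1}\ge1$. Therefore $AR=1$.

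The argument is essentially bookkeeping, and the main thing to get right is the reassembly in the last step: the key observation is that the defining convolution for the $a_{n}$ is recovered precisely by combining one concatenation contribution ($n=N_{1}$), one near-concatenation family ($1\le n\le N_{1}-1$), and the $n=0$ term ($w_{N}$), with nothing else landing on $\mathbf{r}_{N}$. The only genuine subtleties are the noncommutativity, which fixes the side on which each factor must sit, and the edge cases $p=1$ and $M=\emptyset$, where the conventions $w_{\emptyset}=1$ and empty products equal to $1$ make the degenerate contributions match the general pattern; I would verify those separately.
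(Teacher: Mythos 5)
Your proof is correct, and it takes a genuinely different route from the paper's. The paper proves the identity constructively: it sets $a_n=-u_n$ for $n>0$, expands $\bigl(\sum_n a_n\mathbf{h}_n\bigr)^{-1}=\bigl(1-\sum_{n\ge1}u_n\mathbf{h}_n\bigr)^{-1}$ as a geometric series $\sum_K u_K\mathbf{h}_K$, converts to the $\mathbf{r}$-basis via \eqref{e-htor}, interchanges the sums to get $\sum_L \mathbf{r}_L\sum_{K\ge L}u_K$, and then identifies $\sum_{K\ge L}u_K=w_L$ from the commutative identity \eqref{e-cx}; the only structural input is the refinement expansion \eqref{e-htor}, and no multiplication rule for the $\mathbf{r}_L$ is ever needed. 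You instead verify $AR=1$ coefficient-by-coefficient in the $\mathbf{r}$-basis, which requires you to first establish the product rule
\[
\mathbf{h}_{n}\mathbf{r}_{M}=\mathbf{r}_{(n,M_{1},\dots,M_{k})}+\mathbf{r}_{(n+M_{1},M_{2},\dots,M_{k})},
\]
a special case of the ribbon multiplication rule of \cite{ncsf1}; your junction-descent argument for it is sound, and the bookkeeping that reduces the coefficient of $\mathbf{r}_N$ to the convolution $\sum_{n=0}^{N_1}a_nw_{N_1-n}=0$ is complete, including the $p=1$ and $M=\emptyset$ edge cases. What each approach buys: the paper's derivation is shorter, produces the inverse rather than merely certifying it, and yields the combinatorially meaningful byproduct $w_L=\sum_{K\ge L}u_K$ (the weights $w_n$ as sums of $u_K$ over compositions); yours isolates a reusable structural fact about $\Sym$ --- the row-times-ribbon rule --- and makes transparent that the theorem is exactly equivalent to the defining convolution of the $a_n$, at the cost of an existence-of-inverse argument (which is fine, since $a_0=1$ makes $A$ invertible in the completed graded algebra, so a one-sided inverse is two-sided).
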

\begin{proof}
Let us set $a_{n}=-u_{n}$ for $n>0$, and for a composition $K=\left(K_{1},\dots,K_{k}\right)$
 let $u_{K}=u_{K_{1}}\cdots u_{K_{k}}$.
Then 
\begin{align}
\biggl(\sum_{n=0}^{\infty}a_{n}\mathbf{h}_{n}\biggr)^{-1} 
 & =\biggl(1-\sum_{n=1}^{\infty}u_{n}\mathbf{h}_{n}\biggr)^{-1}\nonumber \\
 & =\sum_{K}u_{K}\mathbf{h}_{K}\nonumber \\
 & =\sum_{K}u_{K}\sum_{L\leq K}\mathbf{r}_{L}\nonumber \\
 & =\sum_{L}\mathbf{r}_{L}\sum_{K\geq L}u_{K}.\label{e-rc}
\end{align}
By (\ref{e-cx}) we have 
\[
\sum_{n=0}^{\infty}w_{n}z^{n}=\biggl(1-\sum_{n=1}^{\infty}u_{n}z^{n}\biggr)^{-1}
\]
so 
\begin{equation}
w_{n}=\sum_{K\vDash n}u_{K}.\label{e-sc}
\end{equation}
From (\ref{e-sc}) we see that 
\begin{equation}
w_{L}=\sum_{K\geq L}u_{K};\label{e-sK}
\end{equation}
then the theorem follows from (\ref{e-rc}) and (\ref{e-sK}). \end{proof}
\begin{cor}
\label{c-runs} Let $m$ be a positive integer. Then
\begin{align*}
\sum_{L}\mathbf{r}_{L} & =\biggl(1-\mathbf{h}_{1}+\mathbf{h}_{m}-\mathbf{h}_{m+1}+\cdots\biggr)^{-1}\\
 & =\biggl(\sum_{n=0}^{\infty}\left(\mathbf{h}_{mn}-\mathbf{h}_{mn+1}\right)\biggr)^{-1}
\end{align*}
where the sum on the left is over all compositions $L$ with all parts
less than $m$.\end{cor}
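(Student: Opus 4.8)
The plan is to deduce the corollary from Theorem~\ref{t-runs} by a single judicious specialization of the weights $w_n$. To recover the left-hand side, I would take $w_n = 1$ for $0 \le n \le m-1$ and $w_n = 0$ for $n \ge m$; in particular $w_0 = 1$, as the theorem requires. Since the parts of any composition are positive, the product $w_L = w_{L_1}\cdots w_{L_k}$ then equals $1$ exactly when every part of $L$ is less than $m$, and equals $0$ otherwise. Consequently $\sum_L w_L \mathbf{r}_L$ collapses to the sum of $\mathbf{r}_L$ over compositions with all parts less than $m$, which is precisely the expression on the left of the corollary.

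The next step is to compute the sequence $a_0=1, a_1, a_2,\dots$ determined by \eqref{e-cx} for this choice of weights. The ordinary generating function of the $w_n$ is the truncated geometric series
\[
\sum_{n=0}^{\infty} w_n z^n = 1 + z + \cdots + z^{m-1} = \frac{1-z^m}{1-z},
\]
so its reciprocal is
\[
\sum_{n=0}^{\infty} a_n z^n = \frac{1-z}{1-z^m} = (1-z)\sum_{j=0}^{\infty} z^{mj} = \sum_{j=0}^{\infty}\left(z^{mj} - z^{mj+1}\right).
\]
Reading off coefficients gives $a_{mj} = 1$ and $a_{mj+1} = -1$ for every $j \ge 0$, with all remaining $a_n$ equal to $0$.

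Substituting these values into the right-hand side supplied by Theorem~\ref{t-runs}, and using $\mathbf{h}_0 = 1$, I obtain
\[
\sum_{n=0}^{\infty} a_n \mathbf{h}_n = \sum_{j=0}^{\infty}\left(\mathbf{h}_{mj} - \mathbf{h}_{mj+1}\right) = 1 - \mathbf{h}_1 + \mathbf{h}_m - \mathbf{h}_{m+1} + \cdots,
\]
and taking the reciprocal yields exactly the asserted identity, in both of the displayed forms. Because every step is a routine specialization of an already-established theorem, there is no genuine obstacle here; the only points meriting even a moment's care are the elementary power-series inversion of $(1-z^m)/(1-z)$ and the verification that the indicator structure of $w_L$ matches precisely the stated range of summation on the left.
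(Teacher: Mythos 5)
Your proof is correct and follows exactly the paper's own argument: specializing Theorem~\ref{t-runs} with $w_i=1$ for $i<m$ and $w_i=0$ for $i\ge m$, computing $\sum_n a_n z^n = (1-z)/(1-z^m)$, and reading off the result. The only difference is that you spell out a bit more explicitly why $w_L$ acts as an indicator for compositions with all parts less than $m$, which the paper leaves implicit.
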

\begin{proof}
We apply Theorem \ref{t-runs} with $w_{i}=1$ for $i<m$ and $w_{i}=0$
for $i\geq m$. We have 
\[
\sum_{n=0}^{\infty}w_{n}z^{n}=\frac{1-z^{m}}{1-z},
\]
so 
\[
\sum_{n=0}^{\infty}a_{n}z^{n}=\frac{1-z}{1-z^{m}}=\sum_{n=0}^{\infty}\left(z^{mn}-z^{mn+1}\right).
\]
Then by Theorem \ref{t-runs}, 
\[
\sum_{L}\mathbf{r}_{L}=\biggl(\sum_{n=0}^{\infty}\left(\mathbf{h}_{mn}-\mathbf{h}_{mn+1}\right)\biggr)^{-1},
\]
where the sum on the left is over all compositions $L$ with all parts
less than $m$.
\end{proof}

Applying $\Phi$ to Corollary \ref{c-runs} gives David and Barton's result \eqref{e-F2}. Applying $\hat\Phi$ to Corollary \ref{c-runs} gives 
\begin{equation*}
\left[\sum_{n=0}^{\infty}\left(E_{mn}\frac{x^{mn}}{\left(mn\right)!}-E_{mn+1}\frac{x^{mn+1}}{\left(mn+1\right)!}\right)\right]^{-1}
\end{equation*}
as the exponential generating function for permutations in which every alternating run has length less than $m$; the case $m=3$, as noted earlier, is equivalent to \eqref{e-FE}.

There are two additional special cases of  Theorem \ref{t-runs} that  
are also of particular interest:
 If we set $w_{m}=1$ and $w_{n}=0$ for $n\neq m$ then we find that
\begin{equation}
\sum_{n=0}^{\infty}\mathbf{r}_{\left(m^{n}\right)}
   =\biggl(\sum_{n=0}^{\infty}\left(-1\right)^{n}\mathbf{h}_{mn}\biggr)^{-1},\label{e-h=m}
\end{equation}
where $\left(m^{n}\right)$ is the composition $(\underset{n}{\underbrace{m,m,\dots,m}})$.
The case $m=2$ of \eqref{e-h=m} is equation (98) in Proposition
5.23 of \cite{ncsf1}. Applying $\Phi$ to \eqref{e-h=m} gives Carlitz's result \cite{Carlitz73} that 
\[\biggl(\sum_{n=0}^\infty (-1)^n \frac{x^{mn}}{(mn)!}\biggr)^{-1}\] is the exponential generating function for 
permutations in which every increasing run has length $m$. Applying $\hat\Phi$ to \eqref{e-h=m}, we see that 
\begin{equation}
\label{e-19}
\biggl(\sum_{n=0}^{\infty}E_{mn}\frac{x^{mn}}{(mn)!}\biggr)^{-1}
\end{equation}
counts permutations in which every alternating run has length $m$. If $m$ is 3 or 4, this result can be stated in a simpler way. For $m=4$, \eqref{e-19} counts permutations of $[4n]$ with descent set $\{2,6,10,\dots, 4n-2\}$. For $m=3$, \eqref{e-19} counts permutations whose sequence of ascents (denoted $U$) and descents (denoted $D$) is of the form
$UD^3U^3D^3\cdots D^3U$ or $UD^3U^3D^3\cdots U^3D$.

If we set $w_{n}=t$ for all $n\geq1$  in \eqref{e-cx} then 
\begin{align*}
\sum_{n=0}^{\infty}a_{n}z^{n} & =\biggl(1+\sum_{n=1}^{\infty}tz^{n}\biggr)^{-1}\\
 & =\biggl(1+\frac{tz}{1-z}\biggr)^{-1}\\
 & =1-\frac{tz}{1-\left(1-t\right)z}\\
 & =1-\sum_{n=1}^{\infty}t\left(1-t\right)^{n-1}z^{n}.
\end{align*}
Thus by Theorem \ref{t-runs} we have 
\begin{align}
\sum_{L}t^{l(L)}\mathbf{r}_{L} & =\biggl[1-\sum_{n=1}^{\infty}t\left(1-t\right)^{n-1}\mathbf{h}_{n}\biggr]^{-1}\nonumber \\
 & =\left(1-t\right)\biggl[1-t\sum_{n=0}^{\infty}\left(1-t\right)^{n}\mathbf{h}_{n}\biggr]^{-1},\label{e-heuler}
\end{align}
where the first sum is over all compositions $L$. Formula (\ref{e-heuler})
is Proposition 5.20 of \cite{ncsf1}.
Applying $\Phi$ gives the well-known generating function for the Eulerian polynomials,
\begin{equation*}
1+\sum_{n=1}^\infty A_n(t)\frac{x^n}{n!} = \frac{1-t}{1-te^{(1-t)x}},
\end{equation*}
where 
\begin{equation*}
A_n(t) = \sum_{\pi\in \S_n} t^{\des(\pi) + 1},
\end{equation*}
and $\des(\pi)$ is the number of descents of $\pi$.
Applying $\hat\Phi$ gives
\begin{equation}
\label{e-altdes}
1+\sum_{n=1}^\infty \hat A_n(t)\frac{x^n}{n!} = \frac{1-t}{1-t\bigl(\sec(1-t)x + \tan (1-t)x\bigr)},
\end{equation}
where 
\begin{equation*}
\hat A_n(t) = \sum_{\pi\in \S_n} t^{\altdes(\pi) + 1}
\end{equation*}
and  $\altdes(\pi)$ is the number of alternating descents of $\pi$.
Equation \eqref{e-altdes} is equivalent to Theorem 4.2 of Chebikin \cite{chebikin}.

\subsection{Counting words by runs with distinguished last run}
There is a generalization of Theorem \ref{t-runs} in which the last run is weighted differently from the other runs.
(See Goulden and Jackson \cite[Theorem 4.2.19]{MR702512}, Jackson and Aleliunas \cite[Theorem 11.1]{MR0450080} and Gessel \cite[Theorem 6.12(b)]{gessel-thesis}.) We omit the proof, which is similar to the proof of Theorem \ref{t-runs}. There is a further generalization in which both the first and last runs are weighted differently that we do not state here. (See  \cite[Theorem 11.2]{MR0450080} and \cite[Theorem 6.12(c)]{gessel-thesis}.)

\begin{thm}
\label{t-lrun}Let $w_{1},w_{2},\dots$  and $v_1,v_2,\dots$ be arbitrary commuting weights
and define $a_{0}=1,a_{1},a_{2},\dots$ and $b_1, b_2,\dots$ by 
\begin{equation*}
\sum_{n=0}^{\infty}a_{n}z^{n}=\biggl(\sum_{n=0}^{\infty}w_{n}z^{n}\biggr)^{-1},
\end{equation*}
where $w_{0}=1$, and
\begin{equation*}
\sum_{n=1}^\infty b_n z^n =\sum_{n=1}^\infty v_n z^n\!\!\biggm/\sum_{n=0}^{\infty}w_{n}z^{n}.
\end{equation*}
Then
\[
\sum_{L}w_{L_1}\cdots w_{L_{k-1}}v_{L_k}\mathbf{r}_{L}
=\biggl(\sum_{n=0}^{\infty}a_{n}\mathbf{h}_{n}\biggr)^{-1}
\sum_{n=1}^\infty b_n\mathbf{h}_{n}
\]
where the sum on the left is over all nonempty compositions $L=\left(L_{1},\dots,L_{k}\right)$.
\end{thm}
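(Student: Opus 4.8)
The plan is to mimic the proof of Theorem~\ref{t-runs} as closely as possible, expanding the right-hand side into the $\mathbf r_L$ basis and matching coefficients combinatorially. As before, I would set $a_n = -u_n$ for $n>0$ and write $u_K = u_{K_1}\cdots u_{K_k}$ for a composition $K$, so that $\bigl(\sum_{n\ge0} a_n\mathbf h_n\bigr)^{-1} = \bigl(1 - \sum_{n\ge1} u_n \mathbf h_n\bigr)^{-1} = \sum_K u_K \mathbf h_K$, exactly the geometric-series expansion used in \eqref{e-rc}. The new ingredient is the trailing factor $\sum_{n\ge1} b_n \mathbf h_n$, which I would simply append to this expansion. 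Writing a nonempty composition on the left as $L = (L_1,\dots,L_{k-1},L_k)$, the idea is that the prefix $(L_1,\dots,L_{k-1})$ will be produced by the inverse factor (contributing the $w$-weights) and the final part $L_k$ by the $b$-factor (contributing the distinguished last-run weight $v$).

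The key steps, in order, would be as follows. First, multiply the two expanded factors to get
\[
\biggl(\sum_{n=0}^\infty a_n\mathbf h_n\biggr)^{-1}\sum_{n=1}^\infty b_n\mathbf h_n
 = \sum_{K} u_K\mathbf h_K \sum_{m=1}^\infty b_m\mathbf h_m
 = \sum_{K}\sum_{m\ge1} u_K\,b_m\,\mathbf h_{K\cdot(m)},
\]
where $K\cdot(m)$ denotes the composition $K$ with the part $m$ appended. Second, expand each $\mathbf h_{K\cdot(m)}$ in the $\mathbf r$ basis via \eqref{e-htor}, namely $\mathbf h_{K\cdot(m)} = \sum_{L\le K\cdot(m)}\mathbf r_L$, and reorganize the whole sum to collect the coefficient of a fixed $\mathbf r_L$. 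Third, and this is where the combinatorial care is needed, I would identify exactly which pairs $(K,m)$ give rise to a given $L=(L_1,\dots,L_k)$: the refinement $L \le K\cdot(m)$ splits $L$ into a prefix refining $K$ and a suffix refining the single appended part $m$, and because the last block of $K\cdot(m)$ is its own part, the suffix refining $m$ is forced to be exactly the last part $L_k$ of $L$, with $m = L_k$. The remaining parts $(L_1,\dots,L_{k-1})$ then refine $K$ freely. Summing $u_K$ over all such $K$ and invoking the analogue of \eqref{e-sK} gives $\sum_{K\ge(L_1,\dots,L_{k-1})} u_K = w_{L_1}\cdots w_{L_{k-1}}$, while the appended factor contributes $b_{L_k}$.

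The last step is to convert $b_{L_k}$ into $v_{L_k}$. From the defining relation $\sum_{n\ge1} b_n z^n = \bigl(\sum_{n\ge1} v_n z^n\bigr)\bigl(\sum_{n\ge0} w_n z^n\bigr)^{-1}$, I would rewrite it as $\sum_{n\ge1} b_n z^n = \bigl(\sum_{n\ge1} v_n z^n\bigr)\sum_{K} u_K z^{|K|}$, read off $b_n = \sum v_j u_K$ over the appropriate splittings, and check that after the prefix refinements are absorbed the net coefficient of $\mathbf r_L$ is precisely $w_{L_1}\cdots w_{L_{k-1}} v_{L_k}$. I expect the main obstacle to be purely bookkeeping: keeping the roles of the ``inverse'' $w$-factor, the ``trailing'' $b$-factor, and the base change between $\mathbf h$ and $\mathbf r$ cleanly separated, so that the distinguished last part $L_k$ is correctly matched to $v_{L_k}$ while the earlier parts are matched to the $w$-weights. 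Once the refinement combinatorics is set up as above, the identity follows by matching coefficients of $\mathbf r_L$, just as in the proof of Theorem~\ref{t-runs}.
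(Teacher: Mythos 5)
Your overall architecture is exactly the intended one---the paper omits this proof, saying only that it is ``similar to the proof of Theorem~\ref{t-runs}''---namely: set $u_n=-a_n$, expand $\bigl(\sum_{n\ge0}a_n\mathbf{h}_n\bigr)^{-1}=\sum_K u_K\mathbf{h}_K$, append the trailing factor to get $\sum_{K,m}u_K b_m\mathbf{h}_{K\cdot(m)}$, convert to the $\mathbf{r}$ basis via \eqref{e-htor}, and collect the coefficient of each $\mathbf{r}_L$. The gap is in your identification of which pairs $(K,m)$ contribute to a given $L$. In the paper's reverse refinement order, $L\le K\cdot(m)$ means that $K\cdot(m)$ \emph{refines} $L$ (smaller means coarser; see the example $(7,6)<(1,2,4,5,1)$). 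So the last part $m$ of $K\cdot(m)$ need not equal $L_k$; it is only the final summand of $L_k$, i.e., $L_k=|M'|+m$ where $M'$ is the (possibly empty) string of trailing parts of $K$ that merge with $m$ to form $L_k$. Your claim that $m=L_k$ is forced misses all pairs with $m<L_k$. Concretely, in degree $2$: $\mathbf{h}_{(1,1)}=\mathbf{r}_{(2)}+\mathbf{r}_{(1,1)}$, so the pair $K=(1)$, $m=1$ contributes $u_1b_1$ to the coefficient of $\mathbf{r}_{(2)}$, and the true coefficient is $b_2+u_1b_1=v_2$, not $b_2$. Indeed, if your identification were right, the right-hand side would equal $\sum_L w_{L_1}\cdots w_{L_{k-1}}b_{L_k}\mathbf{r}_L$ and the theorem would be false, since $b_n\neq v_n$ in general.

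The terms you dropped are precisely what turn $b$ into $v$, and your final ``conversion'' step cannot repair this, because the relation you quote there is backwards. Since $\sum_{M'}u_{M'}z^{|M'|}=\bigl(1-\sum_{n\ge1}u_nz^n\bigr)^{-1}=\sum_{n\ge0}w_nz^n$, the definition of the $b_n$ reads $\sum_{n\ge1}v_nz^n=\bigl(\sum_{n\ge0}w_nz^n\bigr)\bigl(\sum_{m\ge1}b_mz^m\bigr)$, i.e., $v_n=\sum_{|M'|+m=n}u_{M'}b_m$; your equation $\sum_n b_nz^n=\bigl(\sum_n v_nz^n\bigr)\sum_K u_Kz^{|K|}$ has $b$ and $v$ interchanged. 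With the correct identification there is nothing left to convert: the coefficient of $\mathbf{r}_L$ on the right-hand side is $\Bigl(\prod_{i=1}^{k-1}\,\sum_{M^{(i)}\vDash L_i}u_{M^{(i)}}\Bigr)\sum_{|M'|+m=L_k}u_{M'}b_m=w_{L_1}\cdots w_{L_{k-1}}v_{L_k}$, which is the theorem. So: keep your outline, but replace ``$m=L_k$ is forced'' by ``$m$ is the final summand of $L_k$,'' and apply the defining relation of the $b_n$ in the correct direction.
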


We use Theorem \ref{t-lrun} to find generating functions involving the Euler numbers for the numbers $g(n)$, discussed in Section \ref{s-2}, which count permutations of $[n]$ with every alternating run of length less than 3 that end with an ascent. Let $c(n)$ be the number of permutations of $[n]$ with every alternating run of length less than 3 in which the last alternating run has length 1, and let $d(n)$ be the number of permutations of $[n]$ with every alternating run of length less than 3 in which the last alternating run has length 2. Thus $f(n) = c(n) + d(n)$ for $n>0$.
It is not difficult to see that for $n>0$,
\begin{equation*}
g(n)=\begin{cases}
  c(n),&\text{if $n$ is odd,}\\
  d(n),&\text{if $n$ is even,}
  \end{cases}
\end{equation*}
and thus 
\begin{equation*}
f(n) -g(n)=\begin{cases}
  c(n),&\text{if $n$ is even,}\\
  d(n),&\text{if $n$ is odd.}
  \end{cases}
\end{equation*}

Taking Theorem  \ref{t-lrun} with $\sum_{n\ge0} w_n z^n = 1+z+z^2$ and $\sum_{n\ge1}v_n = x$ or $x^2$, and then applying the homomorphism $\hat \Phi$, gives the following result.

\begin{thm}
The exponential generating functions for $c(n)$ and $d(n)$ are 
\begin{multline*}
\sum_{n=0}^\infty c(n) \frac{x^n}{n!} \\
  =
  \sum_{n=0}^{\infty}\left(E_{3n+1}\frac{x^{3n+1}}{\left(3n+1\right)!}-E_{3n+2}\frac{x^{3n+2}}{\left(3n+2\right)!}\right)
  \biggm/
  \sum_{n=0}^{\infty}\left(E_{3n}\frac{x^{3n}}{\left(3n\right)!}-E_{3n+1}\frac{x^{3n+1}}{\left(3n+1\right)!}\right)
\end{multline*}
and
\begin{multline*}
\sum_{n=0}^\infty d(n) \frac{x^n}{n!} \\
  =
  \sum_{n=0}^{\infty}\left(E_{3n+2}\frac{x^{3n+2}}{\left(3n+2\right)!}-E_{3n+3}\frac{x^{3n+3}}{\left(3n+3\right)!}\right)
  \biggm/
  \sum_{n=0}^{\infty}\left(E_{3n}\frac{x^{3n}}{\left(3n\right)!}-E_{3n+1}\frac{x^{3n+1}}{\left(3n+1\right)!}\right).
\end{multline*}
\end{thm}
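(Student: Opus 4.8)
The plan is to apply Theorem \ref{t-lrun} with weights chosen so that the ordinary runs are restricted to length less than~3 and the distinguished last run has a prescribed length, and then to push the resulting identity through the homomorphism $\hat\Phi$ using Lemma \ref{l-Phihat}. Concretely, I would take $w_i = 1$ for $i \in \{0,1,2\}$ and $w_i = 0$ for $i \ge 3$, so that $\sum_{n\ge0} w_n z^n = 1 + z + z^2$; for $c(n)$ I set $\sum_{n\ge1} v_n z^n = z$ (that is, $v_1 = 1$ and $v_i = 0$ otherwise), and for $d(n)$ I set $\sum_{n\ge1} v_n z^n = z^2$. With these choices the left-hand side of Theorem \ref{t-lrun} becomes $\sum_L \r_L$, where $L$ ranges over the nonempty compositions all of whose parts are less than~3 and whose last part equals~1 (for $c$) or~2 (for $d$).

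Next I would compute the two auxiliary series, reusing the manipulation already seen in Corollary \ref{c-runs}. Since $1 + z + z^2 = (1-z^3)/(1-z)$, its reciprocal is $\sum_{n\ge0}(z^{3n} - z^{3n+1})$, so $\sum_{n\ge0} a_n \h_n = \sum_{n\ge0}(\h_{3n} - \h_{3n+1})$. Likewise $\sum_{n\ge1} b_n z^n = z/(1+z+z^2) = \sum_{n\ge0}(z^{3n+1} - z^{3n+2})$ in the $c$-case and $z^2/(1+z+z^2) = \sum_{n\ge0}(z^{3n+2} - z^{3n+3})$ in the $d$-case, so that $\sum_{n\ge1} b_n \h_n$ equals $\sum_{n\ge0}(\h_{3n+1} - \h_{3n+2})$ and $\sum_{n\ge0}(\h_{3n+2} - \h_{3n+3})$ respectively.

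Then I would apply $\hat\Phi$ to both sides of the identity in Theorem \ref{t-lrun}. On the left, Lemma \ref{l-Phihat} gives $\hat\Phi(\r_L) = \hat\beta(L)\, x^{|L|}/|L|!$, and summing $\hat\beta(L)$ over the compositions described above collects exactly the permutations of $[n]$ with every alternating run of length less than~3 whose last alternating run has length~1 (for $c$) or~2 (for $d$); by definition this is $c(n)$ or $d(n)$, so the left-hand side becomes $\sum_n c(n)\, x^n/n!$ or $\sum_n d(n)\, x^n/n!$. On the right, since $\hat\Phi$ is a ring homomorphism into the commutative ring $F[[x]]$, it sends $(\sum a_n \h_n)^{-1}$ to the reciprocal of $\hat\Phi(\sum a_n \h_n)$ and sends products to products; using $\hat\Phi(\h_m) = E_m x^m/m!$ then turns the $a$-series into the common denominator $\sum_{n\ge0}(E_{3n}x^{3n}/(3n)! - E_{3n+1}x^{3n+1}/(3n+1)!)$ and turns each $b$-series into the stated numerator. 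Equating the two sides yields both formulas.

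The only point needing care is the combinatorial identification in the last step, namely that summing $\hat\beta(L)$ over the weighted compositions really reproduces $c(n)$ and $d(n)$: that restricting all parts to be less than~3 matches the condition that all alternating runs have length less than~3, and that fixing the last part records the length of the last alternating run. This is immediate from the definitions of the alternating descent composition and of $c(n),d(n)$, so there is no genuine obstacle; the remainder is the routine series bookkeeping built on $1+z+z^2 = (1-z^3)/(1-z)$.
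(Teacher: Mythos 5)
Your proposal is correct and follows exactly the paper's route: the paper proves this theorem by applying Theorem \ref{t-lrun} with $\sum_{n\ge0} w_n z^n = 1+z+z^2$ and $\sum_{n\ge1} v_n z^n = z$ or $z^2$, then pushing the identity through $\hat\Phi$. Your write-up simply makes explicit the series computations and the identification of the weighted sum of $\r_L$'s with $c(n)$ and $d(n)$, which the paper leaves as a one-sentence remark.
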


The first few values of $c(n)$ and $d(n)$ are as follows:

\[\vbox{\halign{\ \hfil\strut$#$\hfil\ \vrule&&\hfil\ $#$\ \hfil\cr
n&0&1&2&3&4&5&6&7&8&9&10&11&12\cr
\noalign{\hrule}
c(n)&0&1&1&3&9&34&159&853&5249&36369&279711&2367212&21854625\cr
d(n)&0&0&1&1&4&16&70&385&2365&16337&125870&1064810&9829820\cr}}
\]

The generating functions for $c(n)$ and $d(n)$ can also be written in terms of trigonometric and hyperbolic functions:
\begin{gather*}
\sum_{n=0}^\infty c(n) \frac{x^n}{n!} 
  =\frac{2\sqrt3 \sinh(\frac12 \sqrt3 x)}{3\cos(\frac 12 x) -\sqrt3 \sinh(\frac12 \sqrt 3 x)}\\
  \shortintertext{and}
\sum_{n=0}^\infty d(n) \frac{x^n}{n!} 
  =\frac{3\sin(\frac12 x) -3\cos(\frac12 x) + 3\cosh(\frac12\sqrt3 x) -\sqrt3\sinh(\frac12\sqrt3 x)}{3\cos(\frac 12 x) -\sqrt3 \sinh(\frac12 \sqrt 3 x)}.
\end{gather*}


\section{The Alternating Major Index}
The \emph{major index} $\maj(\pi)$ of a permutation $\pi$ is the sum of the  descents of $\pi$, so for example, the major index of $15243$ is $2+4=6$. Following Remmel \cite{remmel}, we define the \emph{alternating major index} $\altmaj(\pi)$ of $\pi$ to be the sum of the alternating descents of $\pi$.
Remmel proved the following result, which reduces to a formula equivalent to \eqref{e-altdes} for $q=1$.
\begin{thm}
\label{t-remmel}
\begin{equation}
\label{e-remmel}
\sum_{n=0}^\infty \frac{x^n}{n!} \frac{\sum_{\pi\in \S_n} t^{\altdes(\pi)} q^{\altmaj(\pi)}}{(1-t)(1-tq)\cdots (1-tq^n)}
  =\sum_{k=0}^\infty t^k \prod_{j=0}^k \bigl(\sec(xq^j) +\tan(xq^j)\bigr)
\end{equation}
\end{thm}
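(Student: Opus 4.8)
The plan is to exhibit both sides of \eqref{e-remmel} as the image under the homomorphism $\hat\Phi$ of a single noncommutative symmetric function, and then to prove the resulting identity inside $\Sym$; throughout I work over a field containing the indeterminates $t$ and $q$. Write $\sigma(z)=\sum_{n\ge0}z^{n}\h_{n}$, a power series in $z$ with coefficients in $\Sym$. Since $\hat\Phi$ is an algebra homomorphism into the \emph{commutative} ring $F[[x]]$ with $\hat\Phi(\h_{n})=E_{n}x^{n}/n!$, we get $\hat\Phi(\sigma(q^{j}))=\sum_{n\ge0}E_{n}(q^{j}x)^{n}/n!=\sec(q^{j}x)+\tan(q^{j}x)$, and the commutativity of the target lets us reorder factors freely. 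Thus the right-hand side of \eqref{e-remmel} is precisely $\hat\Phi(\Theta)$, where
\[
\Theta=\sum_{k=0}^{\infty}t^{k}\,\sigma(q^{k})\sigma(q^{k-1})\cdots\sigma(q^{0}).
\]

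Next I would recognize the left-hand side as $\hat\Phi$ of a noncommutative symmetric function written in the $\r$-basis. The statistics $\altdes$ and $\altmaj$ depend only on the alternating descent composition: if $\pi$ has alternating descent composition $L=(L_{1},\dots,L_{m})$, then $\altdes(\pi)=l(L)-1$ and $\altmaj(\pi)=\sum_{i=1}^{m-1}(L_{1}+\cdots+L_{i})=:\maj(L)$. Hence $\sum_{\pi\in\S_{n}}t^{\altdes(\pi)}q^{\altmaj(\pi)}=\sum_{L\vDash n}\hat\beta(L)\,t^{l(L)-1}q^{\maj(L)}$, and by Lemma \ref{l-Phihat} together with the fact that scalars pass through $\hat\Phi$, the left-hand side of \eqref{e-remmel} equals $\hat\Phi(\mathcal R)$, where
\[
\mathcal R=\frac{1}{1-t}+\sum_{L\ne\varnothing}\frac{t^{\,l(L)-1}q^{\maj(L)}}{(1-t)(1-tq)\cdots(1-tq^{|L|})}\,\r_{L}.
\]

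The heart of the argument is then the purely algebraic identity $\Theta=\mathcal R$, which I would prove by expanding the product $\sigma(q^{k})\cdots\sigma(q^{0})$ in the $\r$-basis. Because $\sigma(z)$ generates weakly increasing words weighted by $z^{\text{length}}$, the coefficient of a word with descent composition $L\vDash n$ (with $m=l(L)$ parts) in this product counts the factorizations of that word into $k+1$ weakly increasing blocks: a break is forced at each of the $m-1$ descents and is optional elsewhere, which by the relation \eqref{e-htor} between $\h_{L}$ and the $\r$-basis determines the coefficient of $\r_{L}$. Tracking the $q$-weight gives this coefficient as $q^{\maj(L)}\binom{n+k-m+1}{n}_{q}$ (and $0$ for $k<m-1$), the exponent $\maj(L)$ coming out because the reversed factor order places the earliest runs in the heaviest factors. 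Multiplying by $t^{k}$, summing over $k\ge m-1$, and applying the standard $q$-binomial generating function
\[
\sum_{r=0}^{\infty}\binom{n+r}{n}_{q}t^{r}=\frac{1}{(1-t)(1-tq)\cdots(1-tq^{n})}
\]
produces the factor $t^{\,m-1}=t^{\,l(L)-1}$ and reproduces exactly the $\r_{L}$-coefficient of $\mathcal R$; the degenerate $L=\varnothing$ term contributes $\sum_{k}t^{k}=1/(1-t)$. This establishes $\Theta=\mathcal R$, and applying $\hat\Phi$ gives \eqref{e-remmel}.

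I expect the $q$-bookkeeping in the identity $\Theta=\mathcal R$ to be the main obstacle. In particular one must get the $q$-statistic exactly right: the left-to-right product $\sigma(q^{0})\cdots\sigma(q^{k})$ would instead yield $q^{\maj(L^{*})}$, where $L^{*}$ is the reverse of $L$, so either the factors must be reversed as above or one must invoke the reversal symmetry $\hat\beta(L)=\hat\beta(L^{*})$; this reversal, and the separate treatment of the empty composition, are the only delicate points. As a reassurance, applying $\Phi$ in place of $\hat\Phi$ to the same identity $\Theta=\mathcal R$ turns each $\sec+\tan$ into an exponential and each of $\altdes,\altmaj$ into $\des,\maj$, recovering the classical Carlitz $q$-Eulerian identity; and setting $q=1$ in \eqref{e-remmel} recovers \eqref{e-altdes}.
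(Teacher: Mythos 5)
Your proposal is correct and follows essentially the same route as the paper: both sides of \eqref{e-remmel} are realized as images under $\hat\Phi$ of the two sides of the noncommutative identity \eqref{e-desmaj}, which is then proved by counting factorizations of each word into weakly increasing blocks with breaks forced at descents and optional elsewhere. The only difference is bookkeeping in that count --- the paper phrases it via ``barred words,'' summing over all numbers of bars at once with one geometric series $1/(1-tq^i)$ per space, while you fix $k$, extract the Gaussian binomial coefficient $\binom{n+k-m+1}{n}_q$, and then sum over $k$ using the $q$-binomial series, which is an equivalent computation.
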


We note multiplying both sides of \eqref{e-remmel} by $1-t$ and taking the limit as $t\to 1$ gives
\begin{equation}
\label{e-altmaj}
\sum_{n=0}^\infty \frac{x^n}{n!} \frac{\sum_{\pi\in \S_n} q^{\altmaj(\pi)}}
{(1-q)\cdots (1-q^n)}
  =\prod_{j=0}^\infty \bigl(\sec(xq^j) +\tan(xq^j)\bigr).
\end{equation}

We will now derive \eqref{e-remmel} and \eqref{e-altmaj} from formulas for noncommutative symmetric functions. For any composition $L=(L_1,\dots, L_k)$, let us define the major index of $L$ to be $\maj(L) = (k-1)L_1 + (k-2)L_2+\cdots + L_{k-1}$. 
Then if $\pi$ is a permutation with descent composition $L$, we have $\maj(\pi) = \maj(L)$.

Let $H(u) = \sum_{n=0}^\infty \h_n u^n$. (So $\Phi(H(u))= e^{ux}$ and $\hat\Phi(H(u)) = \sec ux + \tan ux$.) 
Following \cite{ncsf1}, we use the notation 
\[\leftprod{a\le j \le b}T_j\]
to denote the noncommutative product $T_bT_{b-1}\cdots T_a$.
Then the following result is Proposition 5.10 of \cite{ncsf1}:
\begin{lem}
\label{l-ncmaj}
\begin{equation*}
\leftprod{j\ge0}H(q^j) = \sum_{n=0}^\infty \frac{\sum_{L\vDash n}q^{\maj(L)} \r_L}{(1-q)(1-q^2)\cdots(1-q^n)}
\end{equation*}
\end{lem}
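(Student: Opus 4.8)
The goal is to prove Lemma~\ref{l-ncmaj}, an identity expressing the infinite left-product $\leftprod{j\ge0}H(q^j)$ as a $q$-series whose coefficients are the generating functions $\sum_{L\vDash n}q^{\maj(L)}\r_L$ weighted by $1/(q;q)_n$. My plan is to expand the left-hand product directly, collect terms by degree, and identify the coefficient of each homogeneous piece with the claimed right-hand side, reducing everything to a bookkeeping identity about compositions and the major index statistic $\maj(L)$.

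\begin{proof}[Proof sketch]
Writing $H(u)=\sum_{n\ge0}\h_n u^n$, I would first expand the product. Since the left-product notation means $\cdots H(q^2)H(q^1)H(q^0)$, a typical term is obtained by selecting $\h_{L_k}$ from the factor $H(q^0)$, $\h_{L_{k-1}}$ from $H(q^1)$, and so on, reading the indices so that the resulting noncommutative monomial is $\h_{L_1}\h_{L_2}\cdots\h_{L_k}=\h_L$. The key point is to track the power of $q$ that accompanies $\h_L$: if we use parts of sizes $L_1,\dots,L_k$ from factors $H(q^{j_1}),\dots,H(q^{j_k})$ with $j_1>j_2>\cdots>j_k\ge0$, the exponent of $q$ is $\sum_i L_i j_i$. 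Thus the total coefficient of $\h_L$ is $\sum q^{L_1 j_1 + \cdots + L_k j_k}$, summed over all strictly decreasing sequences $j_1>\cdots>j_k\ge0$ of nonnegative integers.
\end{proof}

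\begin{proof}[continued]
The main work is then the evaluation of this geometric-type sum. Setting $m_i = j_i - j_{i+1} - 1 \ge 0$ for $i<k$ and $m_k = j_k\ge0$ converts the strict-decrease condition into an unconstrained sum over nonnegative integers, and a standard rearrangement shows that $\sum q^{\sum_i L_i j_i}$ factors as $q^{\maj(L)}/\bigl((1-q)(1-q^2)\cdots(1-q^n)\bigr)$, where $n=L_1+\cdots+L_k$ and the exponent $\maj(L)=(k-1)L_1+(k-2)L_2+\cdots+L_{k-1}$ arises precisely from the minimal configuration $j_i = k-i$. This is the heart of the argument: it is the same computation that underlies the principal specialization of complete homogeneous symmetric functions, and the factor $1/(q;q)_n$ emerges naturally. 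Collecting the contributions of all $L\vDash n$ gives the coefficient of degree $n$ as $\sum_{L\vDash n}q^{\maj(L)}\r_L / \bigl((1-q)\cdots(1-q^n)\bigr)$, once the $\h_L$'s are re-expressed in the $\r$-basis; but since the identity is stated in terms of $\h_L$ on the left and the $\r_L$ appear only on the right, I would instead directly substitute the relation \eqref{e-htor} or its inverse \eqref{e-r} to pass between the two bases.

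I expect the principal obstacle to be the clean evaluation of the multiple $q$-geometric sum over strictly decreasing index sequences and its identification with $q^{\maj(L)}/(q;q)_n$, together with making the substitution between the $\h$- and $\r$-bases rigorous at the level of infinite series. Convergence and well-definedness in $\Sym[[q]]$ require that for each fixed degree $n$ only finitely many terms contribute, which holds because each $\h_{L_i}$ has positive degree, so the factor $H(q^{j})$ can contribute a constant term $\h_0=1$ for all but finitely many $j$; this guarantees the infinite product is a well-defined element of $\Sym[[q]]$ and legitimizes the term-by-term expansion. Since this lemma is quoted as Proposition~5.10 of \cite{ncsf1}, one could alternatively cite it directly, but the self-contained derivation above via the major-index bookkeeping is the approach I would present.
\end{proof}
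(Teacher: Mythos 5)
Your expansion of the product is set up correctly: since the factors $H(q^j)$ with larger $j$ stand to the left, the coefficient of $\h_L$ in $\leftprod{j\ge0}H(q^j)$ is indeed $\sum q^{L_1j_1+\cdots+L_kj_k}$ over strictly decreasing sequences $j_1>\cdots>j_k\ge0$. But the evaluation you call the heart of the argument is wrong. Substituting $j_i=(k-i)+m_i+m_{i+1}+\cdots+m_k$ with $m_i\ge0$ gives
\begin{equation*}
\sum_{j_1>\cdots>j_k\ge0}q^{L_1j_1+\cdots+L_kj_k}
=\frac{q^{\maj(L)}}{\prod_{t=1}^{k}\bigl(1-q^{L_1+\cdots+L_t}\bigr)},
\end{equation*}
a product of $k=l(L)$ factors indexed by the \emph{partial sums} of $L$ --- not $q^{\maj(L)}/\bigl((1-q)\cdots(1-q^n)\bigr)$, which has $n$ factors. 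The simplest counterexample is $L=(2)$: the sum is $\sum_{j\ge0}q^{2j}=1/(1-q^2)$, not $1/\bigl((1-q)(1-q^2)\bigr)$; the two expressions agree only for $L=(1,1,\dots,1)$. As a consequence, your closing step (``re-express the $\h_L$'s in the $\r$-basis'') is not the routine substitution you suggest. Feeding \eqref{e-htor} into the corrected $\h$-expansion shows that what must actually be proved is the identity
\begin{equation*}
\sum_{K\ge M}\frac{q^{\maj(K)}}{\prod_{t=1}^{l(K)}\bigl(1-q^{K_1+\cdots+K_t}\bigr)}
=\frac{q^{\maj(M)}}{(1-q)(1-q^2)\cdots(1-q^n)}
\end{equation*}
for every $M\vDash n$, the sum being over all refinements $K$ of $M$. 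That identity is the real content of the lemma, and your proposal neither states nor proves it.

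The gap is repaired by doing the bookkeeping at the level of letters rather than runs. Expand each $\h_n$ into words: a word $w$ of length $n$ with descent composition $M$ then acquires the coefficient $\sum q^{a_1+\cdots+a_n}$, summed over sequences $a_1\ge a_2\ge\cdots\ge a_n\ge0$ (where $a_i$ is the index of the factor $H(q^{a_i})$ supplying the $i$th letter of $w$) that decrease strictly at each descent of $w$ --- strict decrease is forced there because the letters drawn from a single factor are weakly increasing. The standard P-partition evaluation of this sum is exactly $q^{\maj(M)}/\bigl((1-q)\cdots(1-q^n)\bigr)$, and since it depends on $w$ only through $M$, the words group into $\r_M$ with the claimed coefficient. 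This letter-level argument is essentially the paper's barred-word proof of the $t$-refinement \eqref{e-desmaj}, of which the present lemma is the specialization obtained by multiplying by $1-t$ and letting $t\to1$, just as \eqref{e-altmaj} is obtained from \eqref{e-remmel}. (For the lemma itself the paper simply cites Proposition 5.10 of \cite{ncsf1}, so the citation route you mention at the end would also have been legitimate; but the self-contained derivation as you wrote it does not go through.)
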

Formula \eqref{e-altmaj} follows directly from Lemma \ref{l-ncmaj} on applying $\hat\Phi$.
Similarly,  \eqref{e-remmel} is obtained by applying $\hat\Phi$ to the following result, which is essentially Theorem 8.2 of Gessel \cite{gessel-thesis}.
\begin{lem}

If $L$ is a nonempty composition, let $\des(L)= l(L) -1$, and if $L$ is the empty composition, let $\des(L) = 0$. Then 
\begin{equation}
\label{e-desmaj}
\sum_{k=0}^\infty t^k \leftprod{0\le j\le k} H(q^j) = \sum_{n=0}^\infty  \frac{\sum_{L\vDash n} t^{\des(L)}q^{\maj(L)}\r_L}{(1-t)\cdots(1-tq^n)}.
\end{equation}
\end{lem}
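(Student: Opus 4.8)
The plan is to prove \eqref{e-desmaj} by comparing the coefficient of each ribbon function $\r_L$ on the two sides. Since $\{\r_L\}_{L\vDash n}$ is a basis of $\Sym_n$ by \eqref{e-htor} and \eqref{e-r}, it suffices to fix a composition $L=(L_1,\dots,L_p)\vDash n$ with descent set $D(L)=\{s_1<\cdots<s_{p-1}\}$, where $s_a=L_1+\cdots+L_a$, so that $\des(L)=p-1$ and $\maj(L)=s_1+\cdots+s_{p-1}$, and to show that the coefficient of $\r_L$ on the left-hand side equals $t^{\des(L)}q^{\maj(L)}\big/\big((1-t)(1-tq)\cdots(1-tq^n)\big)$.

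First I would extract the coefficient of $\r_L$ in a single product $P_k:=\leftprod{0\le j\le k}H(q^j)=H(q^k)H(q^{k-1})\cdots H(q^0)$. Writing each factor as $H(q^j)=\sum_m\h_m q^{jm}$ and expanding $\h_m$ in the noncommuting variables, $P_k$ becomes the sum, over all words $X_{c_1}\cdots X_{c_n}$ equipped with a factorization into $k+1$ weakly increasing (possibly empty) consecutive blocks $w^{(k)}w^{(k-1)}\cdots w^{(0)}$, of the weight $q^{\sum_j j\,|w^{(j)}|}$. Recording the block index of each letter gives a weakly decreasing sequence $k\ge b_1\ge b_2\ge\cdots\ge b_n\ge0$, the weight is $q^{b_1+\cdots+b_n}$, and the requirement that each block be weakly increasing is exactly that $b_i>b_{i+1}$ at every descent of the word. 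Because these constraints and the weight depend only on the descent set, every word with descent composition $L$ receives the same coefficient in $P_k$; by the description \eqref{e-descomp} of $\r_L$ as the sum of the monomials with that descent pattern, this common value is $[\r_L]\,P_k$, so that
\[
[\r_L]\,P_k=\sum_{\substack{k\ge b_1\ge\cdots\ge b_n\ge0\\ b_{s_a}>b_{s_a+1}\ (1\le a\le p-1)}}q^{\,b_1+\cdots+b_n}.
\]

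Next I would sum over $k$. A fixed admissible sequence $(b_1,\dots,b_n)$ appears in $P_k$ precisely when $k\ge b_1$, so $\sum_{k\ge0}t^k[\r_L]\,P_k=\frac1{1-t}\sum_b t^{\,b_1}q^{\,b_1+\cdots+b_n}$, the inner sum being over weakly decreasing $b_1\ge\cdots\ge b_n\ge0$ that decrease strictly at each $s_a$. The last step is to evaluate this $(q,t)$-sum, which I would do by the change of variables $d_i=b_i-b_{i+1}$ for $i<n$ and $d_n=b_n$. Then $b_i=d_i+\cdots+d_n$, whence $b_1=\sum_i d_i$ and $\sum_i b_i=\sum_i i\,d_i$, so $t^{b_1}q^{\sum_i b_i}=\prod_{i=1}^n(tq^i)^{d_i}$; the constraints become $d_i\ge0$ for all $i$ and $d_{s_a}\ge1$ at the descents. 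The sum therefore factors, giving $\prod_{a=1}^{p-1}\frac{tq^{s_a}}{1-tq^{s_a}}\prod_{i\notin D(L)}\frac1{1-tq^i}=t^{p-1}q^{\maj(L)}\prod_{i=1}^n(1-tq^i)^{-1}$, and reinstating the factor $1/(1-t)$ yields exactly the claimed coefficient of $\r_L$.

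The substantive step is the combinatorial reading of $P_k$ together with the observation that the coefficient of $\r_L$ is constant on each descent class, which is what reduces the ribbon coefficient to the single sum over block assignments $b$; the subsequent summation over $k$ and the $d_i$-substitution are routine. The point needing care is the bookkeeping at the boundary: one must allow empty blocks so that the exponent of $t$ counts blocks rather than nonempty runs, and must check that a fixed sequence $b$ contributes to $P_k$ exactly for $k\ge b_1$. This is what produces the clean geometric factor $t^{b_1}/(1-t)$ and hence the denominator $(1-t)(1-tq)\cdots(1-tq^n)$.
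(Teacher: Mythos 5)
Your proof is correct and is essentially the paper's own argument presented in coefficient-extraction form: your block-index sequences $b_1\ge\cdots\ge b_n$ with strict drops at descents are exactly the paper's ``barred words'' (with $d_i=b_i-b_{i+1}$ being the number of bars in the space after letter $i$, and $k-b_1$ the number of bars before the first letter), and your change of variables reproduces the paper's decomposition into one mandatory bar per descent space plus arbitrarily many free bars in every space, which yields the same factor $t^{\des(L)}q^{\maj(L)}/\bigl((1-t)(1-tq)\cdots(1-tq^n)\bigr)$. The only difference is presentational: the paper shows both sides equal a common generating function over barred words, while you extract the coefficient of each $\r_L$ and evaluate the resulting geometric sums directly.
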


\begin{proof}
Let us define a \emph{barred word} to be a word in the letters $X_1, X_2,\dots$ and the symbol $|$ (the bar) with the property that $X_i$ is never immediately followed by any $X_j$ with $j<i$. So, for example, $\Bar X_1 X_3 \Bar X_2 X_2 \Bar X_4 \,||\, X_3$ is a barred word. For a barred word $B$, let $e(B)$ be the number of pairs consisting of a letter $X_i$ in $B$ and a bar to its right. Thus  $e(X_1 \Bar X_3 \Bar)=3$ since in the barred word $X_1 \Bar X_3 \Bar$,  there are two bars to the right of $X_1$ and one bar to the right of  $X_3$. Let $b(B)$ be the number of bars in $B$ and let $u(B)$ be the word in $X_1,X_2,\dots$ that remains when the bars are removed from $B$. 

We will show that both sides of \eqref{e-desmaj} are equal to the sum of $t^{b(B)}q^{e(B)}u(B)$ over all barred permutations $B$.
For the left side, suppose that $B$ is a barred permutation with $k$ bars, so $B = w_k\Bar w_{k-1} \Bar\cdots \Bar w_0$, where each $w_j$ is a word of the form $X_{i_1}\cdots X_{i_m}$, with $i_1\le \cdots\le i_m$. If  each $w_j$ has length $m_j$, then $e(B) = m_1 + 2m_2 +\cdots+km_k$, so the sum of $q^{e(B)}u(B)$ over all barred words with $k$ bars is $H(q^k)H(q^{k-1})\cdots H(1)$.

For the right side, let $w$ be a word in $X_1, X_2,\dots$ with descent composition $L$, so  $w$ is a term in $\r_L$. We think of $w$ as having $n+1$ ``spaces'': before the first letter, between adjacent letters, and after the last letter. We number the spaces 0 to $n$ from left to right. We can obtain a barred word $B$ with $u(B)=w$ by first inserting a bar in each descent space of $w$ and then inserting any number of bars in each of the $n+1$ spaces. A bar in space $i$ contributes a factor of $tq^i$ to $t^{b(B)}q^{e(B)}$. So inserting one bar in each descent space gives a factor of $t^{\des(L)}q^{\maj(L)}$, and then inserting any number of bars in each of the spaces contributes a factor of $1/(1-t)\cdots (1-tq^n)$.
\end{proof}

\bibliographystyle{amsplain}
\addcontentsline{toc}{section}{\refname}\bibliography{new-bibliography}

\end{document}